\newcommand{\cd}{\mathcal{D}}
\newcommand{\cf}{\mathcal{F}}
\newcommand{\cF}{\mathcal{F}}
\newcommand{\cB}{\mathcal{B}}
\newcommand{\cA}{\mathcal{A}}
\newcommand{\cC}{\mathcal{C}}
\newcommand{\cW}{\mathcal{W}}
\newcommand{\cQ}{\mathcal{Q}}
\newcommand{\cS}{\mathcal{S}}
\newcommand{\cL}{\mathcal{L}}
\newcommand{\eZ}{\mathbb Z}
\newcommand{\eR}{\mathbb R}
\newcommand{\eN}{\mathbb N}
\newcommand{\e}{\varepsilon}
\theoremstyle{plain}
\newtheorem{theorem}{Theorem}
\newtheorem{lemma}[theorem]{Lemma}
\newtheorem{corollary}[theorem]{Corollary}
\newtheorem{proposition}[theorem]{Proposition}
\theoremstyle{definition}
\newtheorem{definition}[theorem]{Definition}
\newtheorem{remark}[theorem]{Remark}
\newcommand{\FF}{\mathbb F}
\newcommand{\RR}{\mathbb R}
\newcommand{\NN}{\mathbb N}
\newcommand{\ZZ}{\mathbb Z}
\newcommand{\beq}{\begin{equation}}
\newcommand{\eeq}{\end{equation}}
\newcommand{\wc}{\xrightarrow{\mathscr{D}}}
\newcommand{\wcs}{\xrightarrow{\mathscr{D}^*}}
\newcommand{\wsc}{\stackrel{\cd}{\Longrightarrow}}
\newcommand{\pc}{\xrightarrow[n\to\infty]{\mathrm{P}}}
\newcommand{\CLA}{{\cal A}}
\newcommand{\CLF}{{\cal F}}
\newcommand{\id}{{1\!\!\mathbb I}}
\newcommand{\wh}{\widehat}
\newcommand{\wt}{\widetilde}
\newcommand{\PP}{\mathsf P}
\newcommand{\EE}{\mathsf E}
\newcommand{\eE}{\mathbb E}
\newcommand{\ip}[1]{\langle #1 \rangle}
\newcommand{\cT}{\mathcal T}
\newcommand{\diag}{\mathrm{diag}}
\definecolor{zalia}{rgb}{0.0,0.4,0.0}
\definecolor{raudona}{rgb}{0.8,0.0,0.0}
\title{Uniform asymptotic normality of
weighted sums of short-memory linear 
processes\footnote{The research supported  by the Research Council of Lithuania, grant No. S-MIP-17-76}}
\author{Rimas Norvai\v sa and Alfredas Ra\v ckauskas\\
Vilnius university, Institute of applied mathematics}
\begin{document}

\maketitle

\begin{abstract}
Let $X_1, X_2,\dots$ be a short-memory linear
process of random variables.
For $1\leq q<2$, let $\cF$ be a bounded set
of real-valued functions on $[0,1]$ with
finite $q$-variation.
It is proved that $\{n^{-1/2}\sum_{i=1}^nX_if(i/n)\colon\,f\in\cF\}$ converges in outer
distribution in the Banach space of bounded functions on $\cF$ as $n\to\infty$.
Several applications to a regression model and a multiple change point model
are given.
\end{abstract}

\section{Introduction}

Let $\ZZ$ and $\NN$ be the sets of all integers and nonnegative integers, respectively.
Let $(\eta_j)_{j\in\ZZ}$ be a sequence of independent identically distributed  
random variables on a probability space $(\Omega, \cA, \PP)$ with mean zero and
finite second moment $\sigma^2_\eta=\EE\eta_1^2\not =0$. 
A sequence $(X_i)_{i\in\ZZ}$ of random
variables  defined by
\begin{equation}\label{lin-process}
X_{i}=\sum_{j=0}^\infty \psi_j\eta_{i-j},
\quad\,\, i\in\ZZ,
\end{equation}
is a \emph{linear process} provided a sequence of real numbers 
$(\psi_j)_{j\in\NN}$ is square summable.
We say that $(X_i)_{i\in\ZZ}$ and its subsequence $X_1,X_2,\dots$ are 
\emph{short-memory linear processes} (with 
innovations $(\eta_j)$ and summable filter
$(\psi_j)$) if, in addition,
\begin{equation}\label{filter:1}
 \sum_{j=0}^\infty |\psi_j|<\infty\ \ 
 \textrm{and}\ \ A_{\psi}:=\sum_{j=0}^\infty 
 \psi_j\not=0.
\end{equation}

Given a short memory linear process $X_1, X_2,\dots$, a function 
$f\colon\,[0,1]\to\RR$ and a positive integer $n\in\NN_{+}$, 
let $\nu_n(f)$ be the \emph{$n$-th $f$-weighted sum of linear process}
 defined by 
\begin{equation}\label{nun}
\nu_n(f):=\sum_{i=1}^n X_if
\Big (\frac{i}{n}\Big ).
\end{equation}
If $\cF$ is a class of real-valued measurable
functions on $[0,1]$, then $\nu_n=
\{\nu_n(f)\colon\,f\in\cF\}$ is the
\emph{$n$th $\cF$-weighted sum of 
linear process}, and $\nu_n$, 
$n\in\NN_{+}$, is a sequence of
 \emph{weighted sums of linear process}.
This type of weighting of random variables has a number of applications
in statistics and econometrics (see \cite{ADGK} and references therein).
Some new applications are suggested in the last section of the present paper.

In this paper we consider classes of functions of bounded $q$-variation
with $q\in [1,2)$.
Given a class of functions $\cF$, let $\ell^{\infty}(\cF)$ be the Banach space
of all uniformly bounded real-valued functions $\mu$ on $\cF$ endowed with the 
uniform norm  
\begin{equation}\label{Fsup}
\|\mu\|_{\cF} := \sup\{|\mu(f)|: f \in \cF\}.
\end{equation}
Each $n$th $\cF$-weighted sum of linear process $\nu_n$ has sample paths in 
$\ell^{\infty}(\cF)$.
Whenever $\cF$ is infinite set the Banach space $\ell^\infty(\cF)$ is non-separable. 
We show that normalized sequence of weighted sums of linear process converge in 
$\ell^{\infty}(\cF)$ in outer distribution as defined by
J. Hoffmann-J{\o}rgensen (Definition \ref{HJ} below).
Next is the main result of the paper.

\begin{theorem}\label{teo2a} 
Let $X_1, X_2,\dots$ be a short-memory linear
process given by {\rm(\ref{lin-process})}, 
let $1 \le  q < 2$ and let
$\cF$ be a bounded set of functions on $[0,1]$ with bounded
$q$-variation.
There exists a version of the 
isonormal Gaussian process $\nu$
restricted to $\cF$ with values in a
separable subset of $\ell^{\infty}(\cF)$,
it is measurable for the Borel sets on its
range and 
\begin{equation}\label{1teo2}
n^{-1/2}\nu_n \wcs \sigma_{\eta}A_{\psi}\nu
\qquad\mbox{in $\ell^\infty(\cF)$ as
$n\to\infty$,}
\end{equation}
where $\sigma_{\eta}$ and $A_{\psi}$ are
parameters describing the short-memory
linear process.
\end{theorem}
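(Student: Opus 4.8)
The plan is to reduce the statement, by a truncated Beveridge--Nelson decomposition, to a uniform central limit theorem for weighted sums of the i.i.d.\ innovations $(\eta_i)$, and then to run the standard machinery for weak convergence in $\ell^{\infty}(\cF)$: convergence of the finite-dimensional distributions together with asymptotic equicontinuity for the semimetric $\rho(f,g):=\big(\int_0^1(f-g)^2\big)^{1/2}$ on $\cF$, the last point resting on a metric-entropy bound for classes of bounded $q$-variation. Throughout write $B:=\sup_{f\in\cF}\|f\|_\infty<\infty$, $V:=\sup_{f\in\cF}v_q(f)<\infty$ ($v_q$ the $q$-variation norm), and $S_n(f):=n^{-1/2}\sum_{i=1}^n\eta_if(i/n)$. \emph{Step 1: reduction to i.i.d.\ innovations.} I would first truncate the filter at order $m$, $X_i^{(m)}:=\sum_{j=0}^m\psi_j\eta_{i-j}$, and use for this finite moving average the Beveridge--Nelson identity $X_i^{(m)}=A_\psi^{(m)}\eta_i+\wt\eta_i^{(m)}-\wt\eta_{i-1}^{(m)}$, where $A_\psi^{(m)}:=\sum_{j\le m}\psi_j$ and $\wt\eta^{(m)}$ is again a finite moving average of the innovations, hence square integrable (for the full filter one knows only $\sum_j\psi_j^2<\infty$ and $\sum_j(\sum_{k>j}\psi_k)^2$ may diverge, which is exactly why one truncates). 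Abel summation turns $n^{-1/2}\sum_i(\wt\eta_i^{(m)}-\wt\eta_{i-1}^{(m)})f(i/n)$ into two boundary terms — uniformly negligible over $\cF$ since $B<\infty$ and $n^{-1/2}\max_{i\le n}|\wt\eta_i^{(m)}|\to0$ in probability — plus $-n^{-1/2}\sum_{i=1}^{n-1}\wt\eta_i^{(m)}\big(f((i+1)/n)-f(i/n)\big)$; Hölder's inequality with conjugate exponents $q$ and $q'\in(2,\infty]$ bounds this by $n^{-1/2}V\big(\sum_{i\le n}|\wt\eta_i^{(m)}|^{q'}\big)^{1/q'}$, and the interpolation $\sum_{i\le n}|\wt\eta_i^{(m)}|^{q'}\le(\max_{i\le n}|\wt\eta_i^{(m)}|)^{q'-2}\sum_{i\le n}|\wt\eta_i^{(m)}|^{2}=o_{\mathrm P}(n^{q'/2})$ shows it is $o_{\mathrm P}(1)$ uniformly over $\cF$. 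Hence $n^{-1/2}\sum_iX_i^{(m)}f(i/n)=A_\psi^{(m)}S_n(f)+o_{\mathrm P}(1)$ uniformly in $f$. Finally $\sup_{f\in\cF}\big|n^{-1/2}\sum_i(X_i-X_i^{(m)})f(i/n)\big|$ is bounded, uniformly in $n$, by a constant multiple of $\sum_{j>m}|\psi_j|$ — by the maximal inequality of Step 3 applied to the tail linear process $X-X^{(m)}$ — so, letting $m\to\infty$ and invoking the standard approximation lemma for convergence in $\ell^\infty(\cF)$, the theorem reduces to $S_n\wcs\sigma_\eta\nu$, whence $n^{-1/2}\nu_n\wcs\sigma_\eta A_\psi\nu$ because $A_\psi^{(m)}\to A_\psi$.

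\emph{Step 2: finite-dimensional distributions.} For $f_1,\dots,f_k\in\cF$ and $\lambda\in\RR^k$ the combination $\sum_r\lambda_rS_n(f_r)$ is a normalized sum of the independent mean-zero variables $\eta_i\big(\sum_r\lambda_rf_r\big)(i/n)$; since any function of bounded $q$-variation is Riemann integrable, its variance tends to $\sigma_\eta^2\int_0^1\big(\sum_r\lambda_rf_r\big)^2$, and the Lindeberg condition holds because $B<\infty$ and $\EE\eta_1^2<\infty$. By Lindeberg--Feller and the Cram\'er--Wold device the finite-dimensional distributions of $S_n$ converge to those of $\sigma_\eta\nu$, where $\nu$ is the isonormal Gaussian process on $L^2[0,1]$ restricted to $\cF$.

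\emph{Step 3: asymptotic equicontinuity — the main obstacle.} This is where the hypotheses are really used. The key input is the metric-entropy bound $\log N(\e,\cF,\rho)\lesssim\e^{-q}$ for a uniformly bounded class of $q$-variation at most $V$; it makes $(\cF,\rho)$ totally bounded and, since $\int_0^1\e^{-q/2}\,\dd\e<\infty$ precisely when $q<2$, makes the entropy integral converge. What is needed is a maximal inequality for the increments $S_n(f)-S_n(g)=n^{-1/2}\sum_i\eta_i(f-g)(i/n)$, whose variances are $\le\sigma_\eta^2\rho(f,g)^2+o(1)$; the difficulty is that the innovations carry only a second moment. I would handle this by the two-stage truncation standard in the proof of the empirical central limit theorem under an entropy condition: along the chaining tree the innovations are truncated at resolution-dependent levels, the bounded contributions are estimated by Bernstein's inequality and summed against $\int_0^\delta\sqrt{\log N(\e,\cF,\rho)}\,\dd\e\to0$ ($\delta\downarrow0$), while the truncation remainders are controlled by the Lindeberg-type smallness $\EE[\eta_1^2\id\{|\eta_1|>t\}]\to0$ ($t\to\infty$). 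This yields asymptotic $\rho$-equicontinuity of $S_n$; the associated bound on the modulus over all of $\cF$ is the maximal inequality invoked in Step 1.

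\emph{Conclusion; where the difficulty lies.} Total boundedness of $(\cF,\rho)$, convergence of the finite-dimensional distributions, and asymptotic equicontinuity imply, by the general theory of weak convergence in $\ell^\infty(\cF)$, both that $S_n\wcs\sigma_\eta\nu$ and that $\sigma_\eta\nu$ admits a version with $\rho$-uniformly continuous sample paths — hence with separable range in $\ell^\infty(\cF)$ and Borel measurable on that range; combined with Step 1 this is (\ref{1teo2}). I expect Step 3 to be the crux: the metric-entropy estimate for $q$-variation classes, which is exactly where the threshold $q<2$ enters, together with the truncation device forced by having only a second moment on the innovations; Steps 1 and 2 are routine once this is in hand.
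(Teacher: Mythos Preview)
Your approach is correct but genuinely different from the paper's. The paper does \emph{not} reduce to the i.i.d.\ case; instead it represents $n^{-1/2}\nu_n(f)=\sum_{k\in\ZZ}u_{nk}(f)\eta_k$ with $u_{nk}(f)=n^{-1/2}\sum_i\psi_{i-k}f(i/n)$ and verifies the hypotheses of van~der~Vaart--Wellner Theorem~2.11.1 for the triangular array $Z_{nk}(f)=u_{nk}(f)\eta_k$ directly, using the Khinchin--Kahane inequality to pass from $\ell^2$-type bounds to random entropy estimates against a random probability measure $Q_n$, and invoking the uniform (in $Q$) entropy bound for $q$-variation balls. Your route---truncated Beveridge--Nelson plus Abel summation to peel off $A_\psi^{(m)}S_n$, then a single i.i.d.\ uniform CLT---is more modular: once the i.i.d.\ maximal inequality is in hand, the passage to short-memory linear processes is a separate approximation lemma, and one avoids re-checking all the triangular-array conditions. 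The paper's direct approach, on the other hand, never needs to justify that the Abel remainder and the filter tail are negligible \emph{uniformly} over $\cF$, which is extra work in your scheme. Interestingly, the paper's finite-dimensional argument (its Theorem on $n^{-1/2}\nu_n(g)\wc\sigma_\eta A_\psi\nu(g)$) already uses a pointwise version of your decomposition $\nu_n(g)=A_\psi T_{n0}+\sum_k\psi_k(T_{nk}-T_{n0})$, with the H\"older/$q$-variation bound and a Marcinkiewicz--Zygmund SLLN playing the role of your interpolation estimate---so the two proofs share DNA at the one-dimensional level but diverge on how uniformity in $f$ is obtained.

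One point to tighten. In Step~1 you control the filter tail by ``the maximal inequality of Step~3 applied to the tail linear process $X-X^{(m)}$''. Step~3 is for i.i.d.\ summands, so it cannot be applied to a linear process as written. What actually works is the triangle inequality over the filter: writing $n^{-1/2}\sum_i(X_i-X_i^{(m)})f(i/n)=\sum_{j>m}\psi_j\,n^{-1/2}\sum_i\eta_{i-j}f(i/n)$, one gets
\[
\EE^*\sup_{f\in\cF}\Big|n^{-1/2}\sum_i(X_i-X_i^{(m)})f(i/n)\Big|
\le\Big(\sum_{j>m}|\psi_j|\Big)\,\sup_n\EE^*\sup_{f\in\cF}|S_n(f)|,
\]
using that for each fixed $j$ the shifted sum has the same law as $S_n(f)$. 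The last factor is the i.i.d.\ maximal bound you do prove in Step~3; make this explicit. Two further technicalities the paper handles and you should at least flag: measurability of the relevant suprema (the paper uses the pointwise countable approximation property of bounded $q$-variation balls), and the fact that the chaining in Step~3 must be run for the \emph{empirical} $L^2$-metric $\rho_n$ (or for $\sup_Q\rho_{2,Q}$), not for $\rho_2$ itself---the uniform entropy bound for $q$-variation classes is exactly what makes this harmless.
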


A weak invariance principle for sample paths of partial sum process based on a short memory linear process (Theorem \ref{teo1}) is obtained using the preceding theorem 
and a duality type result (Theorem \ref{dualitymap}).

The paper is organized as follows. 
Section \ref{Notation} contains notation and further results. 
Section \ref{Marginals} contains the proof of convergence of finite dimensional
distributions of the process $n^{-1/2}\nu_n$. 
Asymptotic equicontinuity is discussed in 
Section \ref{Equicontinuity}.
The proof of the main result, Theorem \ref{teo2a}, is given in Section \ref{MainResults}.
Further results, Theorems  \ref{teo1} and \ref{dualitymap}, are proved in Section
\ref{Pvariation}.
Finally, Section \ref{Applications} is devoted to some applications.

\section{Notation and results}\label{Notation}

Since processes considered in this paper have sample paths in non-separable Banach spaces
we use the concept of convergence in outer distribution developed  
by J. Hoffmann-J{\o}rgensen.
Given a probability space $(\Omega.\cA,P)$, let $T$ be a function from $\Omega$ to
the extended real line $\bar{\RR}$.
The outer integral of $T$ with respect to $P$ is defined as
$$
E^{\ast}T:=\inf\big\{EU\colon\,\mbox{$U\colon\,\Omega\to\bar{\RR}$ is measurable,
$EU$ exists and $U\geq T$}\big\}. 
$$
The outer probability of an arbitrary subset $B$ of $\Omega$ is
$P^{\ast}(B):=E^{\ast}\id_B=\inf\{P(A)\colon\,A\in\cA,\,\,A\supset B\}$,
here and elsewhere $\id_B$ is the indicator function of a set $B$.
  
\begin{definition}\label{HJ} 
Let $\eE$ be a metric space. 
For each $n \in\eN$, let 
$(\Omega_n, \cA_n, P_n)$ be a probability
space and let $Z_n$ be a function from 
$\Omega_n$ into $\eE$. 
Suppose that $Z_0$ takes values in some
 separable subset of $\eE$ and is measurable 
 for the Borel sets on its range. 
It is said that the sequence $(Z_n)$ converges
in outer distribution to $Z_0$, denoted 
$Z_n \wcs Z_0$,
if, for every bounded continuous function $h:\eE \to \eR$,
$$
\lim_{n\to\infty}E^*h(Z_n)=Eh(Z_0).
$$
\end{definition}

\begin{remark} If $Z_n, n = 0, 1, \dots$ are random elements taking values in a separable metric space
$\eE$ endowed with the Borel $\sigma$-algebra, then the convergence $Z_n \wcs  Z_0$ is equivalent to usual
convergence in distribution $Z_n\wc Z_0$:
$$
\lim_{n\to\infty}Eh(Z_n)=Eh(Z_0)
$$
for every bounded continuous function $h: \eE \to \eR$.
\end{remark}

To establish convergence in outer distributions
on $\ell^{\infty}(\cF)$ we need a separable
subset for a support of a limit distribution.
Let $UC(\cF,d)$ be a set of all 
$\nu\in \ell^\infty(\cF)$ which are
uniformly $d$-continuous.
The set $UC(\cF, d )$ is 
separable subspace of $\ell^{\infty}(\cF)$
if and only if $(\cF,d)$ is totally bounded.
As usual $N(\e, \cF, d)$ is the minimal number
 of open balls of $d$-radius $\e$ which are 
 necessary to cover $\cF$.
The pseudometric space $(\cF,d)$ is totally
bounded if $N(\e, \cF, d)$ is finite for every 
$\e > 0$. 
This  property always holds under the
 assumptions imposed below.

Let $\cL^2[0,1]=\cL^2([0,1],\lambda)$ be a
set of measurable functions which are 
square-integrable for Lebesgue measure $\lambda$ on $[0,1]$  with a pseudometric 
$\rho_2(f, g)=\rho_{2,\lambda}(f, g)=(\smallint_{[0,1]}(f-g)^2\,d\lambda)^{1/2}$. 
Let $L^2[0,1]=L^2([0,1],\lambda)$ be the
associated Hilbert space endowed with the inner product 
$\ip{f, g}=\int_0^1 f(t)g(t)\lambda(dt)$.
Given a set $\cF\subset \cL^2[0, 1]$, let $\nu=\{\nu(f)\colon\,f\in \cF\}$ be
a centred Gaussian process such that $\EE[\nu(f)\nu(g)]=\ip{f, g}$ for all 
$f, g\in \cF$. 
Such process exists and provides a linear isometry from $L^2[0, 1]$ to 
$L^2(\Omega, \cF, \PP)$.   
By Dudley \cite{Dudley2} or 
\cite[2.6.1 and 2.8.6 Theorems]{Dudley5}, if
\begin{equation}\label{ME}
\int_0^1\sqrt{\log N(x,\cF,\rho_2)}\,dx
<\infty
\end{equation}
then $\nu=\{\nu(f)\colon\,f\in\cF\}$ admits
a version with almost all sample paths
bounded and uniformly continuous on $\cF$
with respect to $\rho_2$.
In what follows we denote a suitable version 
by the same notation $\nu$, and so $\nu$ itself
takes values in 
$UC(\cF, \rho_2)$ and is measurable for the Borel sets on its range. 

In this paper the condition (\ref{ME}) is applied to sets $\cF$ defined as follows.
For $-\infty <a<b<\infty$ and
$0 < p < \infty$ the $p$-variation of a function $g\colon\,[a,b]\to\RR$ is the
supremum 
$$
v_p(g;[a, b]):=\sup\Big\{\sum^m_{i=1}
|g(t_i) - g(t_{i-1})|^p: a=t_0 < t_1 < \cdots< t_m = b, m \in\NN_{+}\Big\},
$$
which can be finite or infinite.
If $v_p(g;[a, b]) < \infty$ then $g$ is said to have bounded $p$-variation and the set of all such functions is denoted by 
$\cW_p[a, b]$. 
We abbreviate $v_p(g):= v_p(g;[0, 1])$.
For each $g \in \cW_p[0, 1]$ and 
$1 \le p < \infty$, let 
$\|g\|_{(p)} := v^{1/p}_p(g)$. Then 
$\|g\|_{(p)}$ is a seminorm equal to zero 
only for constant functions $g$.
The $p$-variation norm is 
$$
\|g\|_{[p]} :=\|g\|_{\sup} +\|g\|_{(p)}
$$
where $\|g\|_{\sup}:=\sup_{0\le t\le 1}|g(t)|$.
The set $\cW_p[0, 1]$ is a non-separable 
Banach space with the norm $\|\cdot\|_{[p]}$. 
If $\cF$ is a bounded subset of $\cW_q[0,1]$ with $1\leq q<2$,
then (\ref{ME}) holds by the proof of Theorem 2.1 in \cite{Dudley4} 
(see also \cite[Theorem 5]{NandR08}).

Now we are prepared to formulate further results.
Let $X_1, X_2,\dots$ be a sequence of real-valued random variables.
For each positive integer $n\in\NN_{+}$, the \emph{$n$th partial sum process} of random
 variables is defined by 
$$
S_n(t):=\sum_{i=1}^{\lfloor nt\rfloor}X_i
=\sum_{i=1}^nX_i\id_{[0,t]}\Big (
\frac{i}{n}\Big ),\quad t\in [0,1].
$$
Here for a real number $x\geq 0$, $\lfloor x\rfloor:=\max\{k\colon\, k\in\NN,\,k\leq x\}$ is a value of the floor function.
Then the \emph{partial sum process} is the sequence of $n$th partial sum processes 
$S_n=\{S_n(t)\colon\,t\in [0,1]\}$,  $n\in\NN_{+}$.
Let $W$ be a Wiener process on $[0,1]$. 
In \cite{NandR08}, assuming that random variables  $X_1, X_2,\dots$ are independent
and identically distributed, it is proved that convergence in outer distribution
\begin{equation}\label{2008result}
n^{-1/2}S_n\wcs \sigma W\quad
\mbox{in $\cW_p[0,1]$ as $n\to\infty$,}
\end{equation}
holds if and only if $\EE X_1=0$ and
$\sigma^2=\EE X_1^2<\infty$.
The assumption $p>2$ can't be replaced by
$p=2$ since in this case the limiting
process $W$ does not belong to $\cW_2[0,1]$.
The next theorem extends this fact to the case where a sequence of random variables
$X_1, X_2,\dots$ is a short-memory linear process.

\begin{theorem}\label{teo1} 
Let $X_1, X_2, \dots$ be a short-memory
linear process, let $p>2$ and let $W$ be a Wiener
process on $[0,1]$. 
Then
\begin{equation}\label{teo1:result1}
n^{-1/2}S_n \wcs\sigma_{\eta} A_{\psi}
W\quad\mbox{in $\cW_p[0, 1]$ as 
$n\to\infty$.}
\end{equation}
\end{theorem}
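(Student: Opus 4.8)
The plan is to deduce Theorem~\ref{teo1} from the main result, Theorem~\ref{teo2a}, by viewing the partial sum process as an $\cF$-weighted sum of the linear process indexed by indicator functions and transporting the convergence from $\ell^\infty(\cF)$ to $\cW_p[0,1]$ through the duality map of Theorem~\ref{dualitymap} and the continuous mapping theorem for convergence in outer distribution. In effect Theorem~\ref{teo1} is the linear‑process analogue of the argument behind \eqref{2008result} in \cite{NandR08}, with Theorem~\ref{teo2a} playing the role of the weighted‑sum limit theorem used there.

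More precisely, I would first fix $q$ with $1\le q<2$ and $1/p+1/q>1$; this is possible because $W$ almost surely has sample paths in $\cW_{p'}[0,1]$ for every $p'>2$, so there is no loss in replacing $p$ by a value as close to $2$ as we like. Let $\cF$ be a closed ball of $\cW_q[0,1]$ large enough to contain every indicator $\id_{[0,t]}$, $t\in[0,1]$; this is a bounded set of functions of bounded $q$-variation, so Theorem~\ref{teo2a} yields
\[
n^{-1/2}\nu_n\;\wcs\;\sigma_{\eta}A_{\psi}\,\nu\qquad\text{in }\ell^{\infty}(\cF),
\]
with $\nu$ the isonormal process on $\cF$, valued in the separable set $UC(\cF,\rho_2)$. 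Put $D\mu:=\bigl(t\mapsto\mu(\id_{[0,t]})\bigr)$. Since $\id_{[0,t]}(i/n)=\id\{i\le nt\}$ we get $D\bigl(n^{-1/2}\nu_n\bigr)(t)=n^{-1/2}\sum_{i\le nt}X_i=n^{-1/2}S_n(t)$, so $D$ sends $n^{-1/2}\nu_n$ to $n^{-1/2}S_n$. Theorem~\ref{dualitymap} supplies the analytic content that $D$, restricted to a suitable closed subspace of $\ell^{\infty}(\cF)$, is a bi-Lipschitz homeomorphism onto its image in $\cW_p[0,1]$ — equivalently, $g\mapsto\bigl(f\mapsto\int f\,dg\bigr)$ defines for each $g\in\cW_p[0,1]$ an element of $\ell^\infty(\cF)$ with norm comparable to $\|g\|_{(p)}$ and is the inverse of $D$ on its range — and that both $n^{-1/2}\nu_n$ and the limit $\sigma_{\eta}A_{\psi}\nu$ lie in that subspace. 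Extending $D$ to a continuous map on all of $\ell^{\infty}(\cF)$ and applying the continuous mapping theorem for $\wcs$ then gives
\[
n^{-1/2}S_n=D\bigl(n^{-1/2}\nu_n\bigr)\;\wcs\;\sigma_{\eta}A_{\psi}\,D(\nu)\qquad\text{in }\cW_p[0,1].
\]
Finally $D(\nu)$ is a centred Gaussian element of $\cW_p[0,1]$ with $\EE[D(\nu)(s)D(\nu)(t)]=\EE[\nu(\id_{[0,s]})\nu(\id_{[0,t]})]=\langle\id_{[0,s]},\id_{[0,t]}\rangle=\min(s,t)$, and, because $\nu$ is $\rho_2$-uniformly continuous while $\rho_2(\id_{[0,s]},\id_{[0,t]})=|s-t|^{1/2}$, it has continuous paths; hence $D(\nu)$ is a standard Wiener process $W$ and \eqref{teo1:result1} follows.

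The main obstacle is the duality itself — the content of Theorem~\ref{dualitymap} — and in particular the need to control the $p$-variation norm rather than merely the supremum norm: $D$ is trivially bounded into $\ell^\infty[0,1]$, but for an arbitrary $\mu\in\ell^\infty(\cF)$ the image $D\mu$ need not have finite $p$-variation. One therefore needs (i) the Love--Young inequality $\bigl|\int f\,dg\bigr|\le C_{p,q}\,v_q(f)^{1/q}v_p(g)^{1/p}$, which shows that integration against a $g\in\cW_p[0,1]$ defines an element of $\ell^\infty(\cF)$ with the right norm bound, and (ii) the matching lower bound, obtained by testing $g$ against step functions assembled from a near-optimal partition $\{t_i\}$ for $v_p(g)$ with weights proportional to $\mathrm{sgn}\bigl(g(t_i)-g(t_{i-1})\bigr)\,|g(t_i)-g(t_{i-1})|^{p-1}$, whose $q$-variation is estimated by H\"older's inequality. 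A subsidiary, more routine point is the bookkeeping needed for the continuous mapping theorem: one checks that both $n^{-1/2}\nu_n$ and $\sigma_{\eta}A_{\psi}\nu$ correspond under $D$ to genuine $\cW_p[0,1]$-valued objects ($S_n$ is a step function, and $\nu$ equals $f\mapsto\int_0^1 f\,dW$), so that the convergence of Theorem~\ref{teo2a} takes place inside the closed subspace on which $D$ is a homeomorphism, and then extends $D$ continuously to all of $\ell^{\infty}(\cF)$ (for instance by Dugundji's extension theorem) so that the mapping theorem in the sense of Hoffmann--J{\o}rgensen applies verbatim.
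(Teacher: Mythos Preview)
Your proposal is correct and rests on the same two ingredients as the paper's proof: Theorem~\ref{teo2a} applied to the ball $\cF_q$, and the Lipschitz bound of Theorem~\ref{dualitymap} for the map $L\mapsto\bigl(t\mapsto L(\id_{[0,t]})\bigr)$. The only substantive difference is the mechanism for transporting the convergence from $\ell^\infty(\cF_q)$ to $\cW_p[0,1]$. The paper does not extend $D$ via Dugundji and invoke the continuous mapping theorem; instead it passes through the Skorokhod--Dudley--Wichura almost-sure representation, applies Theorem~\ref{dualitymap} to the differences $\mu_n-\mu_m$ (which lie in $\cW_q^\ast$ trivially, being finite linear combinations of point evaluations) to see that $W_n:=T(\mu_n)$ is Cauchy in $\cW_p[0,1]$, identifies the limit via finite-dimensional distributions as a Wiener process, and checks that it lands in the separable closed subspace $\cC\cW_p^\ast[0,1]$. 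This route avoids the one point your sketch leaves implicit: for the extended $\tilde D$ to agree with $D$ at the limit $\sigma_\eta A_\psi\nu$ you need the version of $\nu$ produced by Theorem~\ref{teo2a} to lie almost surely in the closed subspace $\cW_q^\ast\hookrightarrow\ell^\infty(\cF_q)$, which requires an extra argument (e.g.\ representing $\nu$ pathwise as the Young integral $f\mapsto\int_0^1 f\,dW$). The paper never needs to evaluate $D$ at $\nu$, only at the $\mu_n$'s.

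Two minor corrections. Theorem~\ref{dualitymap} is stated for conjugate exponents $p^{-1}+q^{-1}=1$, so take $q=p/(p-1)\in(1,2)$ rather than $1/p+1/q>1$. And Theorem~\ref{dualitymap} only gives the one-sided bound $\|T(L)\|_{[p]}\le 5\|L\|_{\cF_q}$; that is all the continuous mapping theorem needs, so the bi-Lipschitz claim and the Love--Young half of your last paragraph can be dropped.
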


For any $p>0$,
the $p$-variation of a sample function of
the $n$th partial sum is
\begin{equation}\label{teo1b}
v_p(S_n)=\max\Big\{\sum_{j=1}^m\Big |
\sum_{i=k_{j-1}+1}^{k_j}X_i\Big |^p\colon\,
0=k_0<\cdots <k_m=n,\,\,1\leq m\leq n\Big\}.
\end{equation}
Theorem \ref{teo1} and
continuous mapping theorem (e.g. Theorem
1.3.6 in \cite{VandW96}) applied to the
$p$-variation yield the following.

\begin{corollary}\label{teo1a}
Under the hypotheses of Theorem 
{\rm \ref{teo1}}, we have
$$
n^{-\frac{p}{2}}v_p(S_n)\wc \sigma_{\eta}^p
 A_{\psi}^pv_p(W)\quad\mbox{as  $n\to\infty$.}
$$
\end{corollary}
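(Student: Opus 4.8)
The plan is to deduce the corollary from Theorem~\ref{teo1} by the continuous mapping theorem, taking as the map the $p$-variation functional $v_p$ together with its degree-$p$ homogeneity.

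First I would check that $v_p\colon\cW_p[0,1]\to[0,\infty)$ is continuous for the $p$-variation norm $\|\cdot\|_{[p]}$. This is immediate from the fact recalled in Section~\ref{Notation} that, for $1\le p<\infty$, the map $g\mapsto\|g\|_{(p)}=v_p(g)^{1/p}$ is a seminorm: the triangle inequality gives
\[
\bigl|\,v_p(g)^{1/p}-v_p(h)^{1/p}\,\bigr|\le\|g-h\|_{(p)}\le\|g-h\|_{[p]},
\]
so $g\mapsto v_p(g)^{1/p}$ is $1$-Lipschitz on $(\cW_p[0,1],\|\cdot\|_{[p]})$, and composing with the continuous map $x\mapsto x^p$ on $[0,\infty)$ shows $v_p$ itself is continuous there. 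Next I would note that $v_p$ is positively homogeneous of degree $p$, i.e.\ $v_p(cg)=|c|^p v_p(g)$ for all $c\in\RR$, directly from the definition of $v_p$ as a supremum of sums of $p$th powers of increments. Consequently $v_p(n^{-1/2}S_n)=n^{-p/2}v_p(S_n)$, the right-hand side being exactly the combinatorial quantity in~(\ref{teo1b}), and $v_p(\sigma_\eta A_\psi W)=|\sigma_\eta A_\psi|^p v_p(W)$, which is the constant $\sigma_\eta^p A_\psi^p$ times $v_p(W)$ appearing in the statement.

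Then I would apply the continuous mapping theorem for convergence in outer distribution (Theorem~1.3.6 in \cite{VandW96}) to the conclusion $n^{-1/2}S_n\wcs\sigma_\eta A_\psi W$ in $\cW_p[0,1]$ of Theorem~\ref{teo1}, with the everywhere-continuous map $h=v_p$. Theorem~\ref{teo1} already provides that the limit takes values in a separable subset of $\cW_p[0,1]$ and is Borel measurable on its range (in particular $W\in\cW_p[0,1]$ a.s.\ for $p>2$, so $v_p(W)<\infty$ a.s.), which is precisely the hypothesis under which the continuous mapping theorem is stated in the Hoffmann-J{\o}rgensen framework; and since $v_p$ is continuous on the separable range of the limit, $v_p(\sigma_\eta A_\psi W)$ is a genuine real-valued Borel random variable. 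The theorem then yields $v_p(n^{-1/2}S_n)\wcs v_p(\sigma_\eta A_\psi W)$ in $\RR$; as $\RR$ is separable this is ordinary convergence in distribution, and combined with the two homogeneity identities it is exactly $n^{-p/2}v_p(S_n)\wc\sigma_\eta^p A_\psi^p v_p(W)$. There is no substantial obstacle here: once the continuity of $v_p$ in $p$-variation norm (a one-line consequence of the triangle inequality for $\|\cdot\|_{(p)}$) and its degree-$p$ homogeneity are observed, everything is a single invocation of the continuous mapping theorem, and the only point needing any care — the outer-probability bookkeeping in passing from the non-separable space $\cW_p[0,1]$ to $\RR$ — is subsumed in the cited form of that theorem.
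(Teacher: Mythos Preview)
Your proposal is correct and follows exactly the route the paper indicates: the paper states just before the corollary that it follows from Theorem~\ref{teo1} together with the continuous mapping theorem (Theorem~1.3.6 in \cite{VandW96}) applied to the $p$-variation, and you have simply supplied the details the paper omits---continuity of $v_p$ via the seminorm property of $\|\cdot\|_{(p)}$, the degree-$p$ homogeneity, and the separability/measurability bookkeeping for the limit.
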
 

Suppose that $\cF$ contains the family of
indicator functions of subintervals of 
$[0,1]$.
Then the $n$th partial sum process
of a linear process $S_n$ and the $n$th 
$\cF$-weighted sum of linear process
$\nu_n$ are related by the equality
\begin{equation}\label{Snun}
S_n(t)=\nu_n(\id_{[0,t]})\quad\mbox{ for
each $t\in [0,1]$.}
\end{equation}
This relation is used in the following theorem
to obtain  Theorem \ref{teo1} from a uniform
convergence of $n^{-1/2}\nu_n$ over the set 
$\cF_q=\{f\in\cW_q[a,b]\colon\, \|f\|_{[q]}\leq 1\}$, $1\leq q<2$,
which is the unit ball in  $\cW_q[a,b]$.
For this aim the $n$-th $\cF_q$-weighted
sum of linear process $\nu_n$ is considered
as a bounded linear functional over
$\cW_q[a,b]$.

\begin{theorem}\label{dualitymap}
Let $1<p<\infty$ and $1<q<\infty$ be such
that $p^{-1}+q^{-1}=1$.
For a linear bounded functional
$L\colon\,\cW_q[a,b]\to\RR$  let
$T(L)(t):=L(\id_{[a,t]})$ for each 
$t\in [a,b]$.
Then $T$ is a linear mapping from the
dual space $\cW_q^{\ast}[a,b]$ into 
$\cW_p[a,b]$ and 
\begin{equation}\label{1dualitymap}
\|T(L)\|_{[p]}\leq 5\|L\|_{\cF_q},\quad
L\in\cW_q^{\ast}[a,b],
\end{equation}
where $\|\cdot\|_{\cF_q}$ is defined by {\rm (\ref{Fsup})}.
\end{theorem}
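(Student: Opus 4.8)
The plan is to check linearity of $T$ by inspection and then, writing $g:=T(L)$, to bound the two pieces of the norm $\|g\|_{[p]}=\|g\|_{\sup}+v_p(g)^{1/p}$ using only the fact that $\|L\|_{\cF_q}$ is the operator norm of $L$ on $(\cW_q[a,b],\|\cdot\|_{[q]})$ together with the conjugacy relation $(p-1)q=p$. Linearity is immediate, since $T(\alpha L_1+\beta L_2)(t)=(\alpha L_1+\beta L_2)(\id_{[a,t]})=\alpha T(L_1)(t)+\beta T(L_2)(t)$ for all $t$. The supremum estimate is also easy: for each $t\in[a,b]$ the function $\id_{[a,t]}$ satisfies $\|\id_{[a,t]}\|_{\sup}\le 1$ and $v_q(\id_{[a,t]})\le 1$ (a single jump of size at most $1$), so $\|\id_{[a,t]}\|_{[q]}\le 2$ and hence $|g(t)|=|L(\id_{[a,t]})|\le 2\|L\|_{\cF_q}$, giving $\|g\|_{\sup}\le 2\|L\|_{\cF_q}$. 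It then remains to prove $v_p(g)^{1/p}\le 3\|L\|_{\cF_q}$, which in particular shows $g\in\cW_p[a,b]$, and summing the two bounds yields the constant $5$.

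For the $p$-variation I would fix a partition $a=t_0<\dots<t_m=b$, set $a_i:=g(t_i)-g(t_{i-1})=L(\id_{[a,t_i]})-L(\id_{[a,t_{i-1}]})=L(\id_{(t_{i-1},t_i]})$ and $M:=\sum_{i=1}^m|a_i|^p$, and then invoke the classical sign-and-test-function device. Choosing $\e_i\in\{-1,1\}$ with $\e_ia_i=|a_i|$ and forming the step function $h:=\sum_{i=1}^m\e_i|a_i|^{p-1}\,\id_{(t_{i-1},t_i]}\in\cW_q[a,b]$, one gets $L(h)=\sum_i|a_i|^{p-1}|a_i|=M$, so $M=L(h)\le\|L\|_{\cF_q}\,\|h\|_{[q]}$. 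The core of the argument is the estimate $\|h\|_{[q]}\le 3M^{1/q}$. Its supremum norm is $\max_i|a_i|^{p-1}\le\bigl(\sum_i|a_i|^p\bigr)^{(p-1)/p}=M^{1/q}$. For its $q$-variation, note that $h$ is a step function with plateau values $\e_i|a_i|^{p-1}$ on $(t_{i-1},t_i]$ and value $0$ at $a$; after discarding sampling points with equal consecutive values (which leaves any $q$-variation sum unchanged) an admissible partition meets each plateau at most once, so by the elementary bound $|x-y|^q\le 2^{q-1}(|x|^q+|y|^q)$ the corresponding sum is at most $2^q\sum_i|a_i|^{(p-1)q}=2^qM$, using $(p-1)q=p$; hence $v_q(h)\le 2^qM$ and $v_q(h)^{1/q}\le 2M^{1/q}$. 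Combining, $\|h\|_{[q]}\le 3M^{1/q}$, so $M\le 3\|L\|_{\cF_q}M^{1/q}$ and therefore $M^{1/p}=M^{1-1/q}\le 3\|L\|_{\cF_q}$. Taking the supremum over partitions gives $v_p(g)\le(3\|L\|_{\cF_q})^p<\infty$, so $g\in\cW_p[a,b]$ and $v_p(g)^{1/p}\le 3\|L\|_{\cF_q}$; together with the supremum bound this yields $\|g\|_{[p]}\le 5\|L\|_{\cF_q}$.

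The step I expect to require the most care is the $q$-variation bound for the test function $h$. Because $r\mapsto r^q$ is superadditive for $q\ge 1$, the $q$-variation of a step function is not simply the sum of the $q$-th powers of its jumps --- refining a partition may either increase or decrease a variation sum --- so one cannot write $v_q(h)=\sum_i|a_i|^{(p-1)q}$ outright; the reduction to partitions with distinct consecutive plateau values, together with the $2^{q-1}$-convexity inequality, circumvents this at the cost only of the harmless factor $2^q$. Everything else is routine bookkeeping with the conjugate exponents.
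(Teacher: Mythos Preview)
Your proof is correct and follows essentially the same route as the paper: the paper also bounds $\|g\|_{\sup}\le 2\|L\|_{\cF_q}$ from $\|\id_{[a,t]}\|_{[q]}\le 2$, and for the $p$-variation it applies $L$ to step functions $f_b=\sum_i b_i\id_{(t_{i-1},t_i]}$, bounds $\|f_b\|_{(q)}\le 2\|b\|_q$ (via Minkowski, which is equivalent to your $2^{q-1}$-convexity bound), and then invokes the H\"older duality $(\sum_i|a_i|^p)^{1/p}=\sup_{\|b\|_q\le1}|\sum_i b_ia_i|$ --- exactly your explicit choice $b_i=\e_i|a_i|^{p-1}$.
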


To prove Theorem \ref{teo2a} we use the
asymptotic equicontinuity criterion for
convergence in law in $\ell^{\infty}(\cF)$
(see e.g. \cite[Theorem 3.7.23]{GandN16}
or \cite[p. 41]{VandW96}).
The conclusion of Theorem \ref{teo2a}
holds if and only if $(i)$, $(ii)$ and $(iii)$
hold, where
\begin{itemize}
\item[$(i)$] the finite dimensional distributions of 
$n^{-1/2}\nu_n$ converge in distribution to the corresponding 
finite dimensional distributions of $\nu$;
\item[$(ii)$] $n^{-1/2}\nu_n$ is
 asymptotically equicontinuous with respect
  to $\rho_2$;
\item[$(iii)$] $\cF$ is totally bounded
for $\rho_2$.
\end{itemize}

\section{Convergence of finite dimensional distributions}\label{Marginals}

In this section we establish convergence
of finite dimensional distributions of the
processes $n^{-1/2}\nu_n$.
Recall that $\cF_q=\{f\in\cW_q[0, 1]
\colon\,\|f\|_{[q]}\le 1\}$ is endowed with
the pseudometric $\rho_2$.
We begin with a one-dimensional case.

We do not know results in the literature
which yield the convergence in distribution of
real random variables $n^{-1/2}\nu_n(g)$
when $g\in\cW_q[0,1]$ for some $q\in [1,2)$
under the hypotheses of Theorem \ref{CM1}
below.
The best available related results are
due to K.M. Abadir et all \cite{ADGK}
which give the desired fact when $g$ has
bounded total variation.
Next is a more general result for 
short-memory linear process with
independent identically distributed
inovations and weights given by a 
function $g$.

\begin{theorem}\label{CM1}
Suppose $(X_i)_{i\in\ZZ}$ is a linear 
process defined by {\rm (\ref{lin-process})} and {\rm (\ref{filter:1})}, and $\nu$ is
the isonormal Gaussian processes 
on $\cL_2[0,1]$.
If $g\in\cW_q[0,1]$ for some
$1\leq q<2$, then
\begin{equation}\label{1CM1}
n^{-1/2}\nu_n(g)\wc \sigma_{\eta}A_{\psi}
\nu (g),\quad\mbox{as $n\to\infty$.}
\end{equation}
\end{theorem}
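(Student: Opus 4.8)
\textbf{Proof proposal for Theorem \ref{CM1}.}

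The plan is to reduce the problem to a classical central limit theorem for weighted sums of i.i.d.\ random variables by exploiting the linear structure of $(X_i)$ together with the summability of the filter. First I would substitute the representation (\ref{lin-process}) into $\nu_n(g)=\sum_{i=1}^nX_ig(i/n)$ and interchange the two sums, writing $n^{-1/2}\nu_n(g)=n^{-1/2}\sum_{k\in\ZZ}\eta_k\, b_{n,k}$ where $b_{n,k}=\sum_{i=1\vee(k+1)}^{n}\psi_{i-k}\,g(i/n)$ is obtained by collecting the coefficient of $\eta_k$. The natural candidate for the limit variance is $\sigma_\eta^2A_\psi^2\|g\|_2^2$, since $\EE[\nu(g)^2]=\ip{g,g}=\int_0^1 g^2\,d\lambda$; so the target is the Lindeberg CLT for the triangular array $\{n^{-1/2}\eta_k b_{n,k}\}$.

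The key steps, in order, would be: (1) show that $n^{-1}\sum_{k\in\ZZ}b_{n,k}^2\to A_\psi^2\int_0^1 g^2\,d\lambda$; this is the main analytic step and uses that $\sum_j|\psi_j|<\infty$ (so $A_\psi=\sum_j\psi_j$ is well defined) combined with the fact that a function of bounded $q$-variation with $q<2$ is bounded and Riemann integrable, so $n^{-1}\sum_{i=1}^n g(i/n)^2\to\int_0^1 g^2$ and, more generally, $n^{-1}\sum_i g(i/n)g((i+h)/n)\to\int_0^1 g^2$ for each fixed shift $h$, with the cross terms controlled by dominated convergence in $(h,h')$ against the summable weights $|\psi_j||\psi_{j'}|$; (2) verify the Lindeberg condition, which reduces to showing $\max_k b_{n,k}^2/\sum_k b_{n,k}^2\to 0$, i.e.\ a uniform smallness of the coefficients --- here one bounds $|b_{n,k}|\le \|g\|_{\sup}\sum_j|\psi_j|$ uniformly in $n$ and $k$, while the denominator grows like $n$, so $\max_k b_{n,k}^2=O(1)=o(n)$ and Lindeberg follows from the i.i.d.\ assumption and finiteness of $\sigma_\eta^2$; (3) conclude by the Lindeberg--Feller CLT that $n^{-1/2}\sum_k\eta_k b_{n,k}\wc N(0,\sigma_\eta^2A_\psi^2\|g\|_2^2)$, which is exactly the law of $\sigma_\eta A_\psi\nu(g)$.

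An alternative, perhaps cleaner route that I would keep in reserve is the Beveridge--Nelson decomposition: write $\psi(z)=A_\psi-(1-z)\tilde\psi(z)$ at the level of the lag operator to get $X_i=A_\psi\eta_i-(\tilde\eta_i-\tilde\eta_{i-1})$ with $\tilde\eta_i=\sum_{j}\tilde\psi_j\eta_{i-j}$ and $\tilde\psi_j=\sum_{\ell>j}\psi_\ell$ summable. Then $\nu_n(g)=A_\psi\sum_i\eta_i g(i/n)-\sum_i(\tilde\eta_i-\tilde\eta_{i-1})g(i/n)$; the first term, after normalization, converges to $\sigma_\eta A_\psi\nu(g)$ by the CLT for i.i.d.\ weighted sums (again using boundedness and Riemann integrability of $g$), and the second term is negligible after an Abel summation $\sum_i(\tilde\eta_i-\tilde\eta_{i-1})g(i/n)=\tilde\eta_n g(1)-\sum_{i=1}^{n-1}\tilde\eta_i\big(g((i+1)/n)-g(i/n)\big)$, whose $L^2$-norm is controlled by $v_1$-type increments of $g$; since $g\in\cW_q$ with $q<2$ one gets $\sum_i|g((i+1)/n)-g(i/n)|^2\le v_q(g)\cdot o(1)$ by interpolating the $q$-variation, giving an $o(n)$ bound on the variance. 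The main obstacle in either approach is the interchange of limits and sums in step (1)/the Abel step: one must ensure the tail contributions of the filter and the oscillation of $g$ near its discontinuities do not accumulate, and this is precisely where the hypotheses $\sum_j|\psi_j|<\infty$ and $q<2$ (hence $g$ bounded with controlled quadratic oscillation) are used in tandem.
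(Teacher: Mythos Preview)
Your main route---applying Lindeberg--Feller directly to the triangular array $\{n^{-1/2}b_{n,k}\eta_k\}_{k\in\ZZ}$---is correct and genuinely different from the paper's argument. The paper writes $\nu_n(g)=\sum_{k\ge 0}\psi_kT_{nk}$ with $T_{nk}=\sum_{i=1}^n\eta_{i-k}g(i/n)$, splits this as $A_\psi T_{n0}+\sum_{k\ge 0}\psi_k(T_{nk}-T_{n0})$, applies the ordinary Lindeberg CLT to the purely i.i.d.\ weighted sum $n^{-1/2}T_{n0}$, and then shows that the remainder is $o_P(1)$: for each fixed $k$ one reindexes, discards $O(k)$ boundary terms, and bounds $\big|n^{-1/2}\sum_{i}\eta_i\big(g((i+k)/n)-g(i/n)\big)\big|\le\big(n^{-p/2}\sum_i|\eta_i|^p\big)^{1/p}\cdot k\|g\|_{(q)}$ with $p^{-1}+q^{-1}\ge 1$; the first factor is driven to $0$ a.s.\ by the Marcinkiewicz--Zygmund SLLN, and this is precisely where $q<2$ (hence $p>2$) is used. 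Your approach bypasses the remainder analysis entirely: once $n^{-1}\sum_k b_{n,k}^2\to A_\psi^2\int_0^1 g^2\,d\lambda$ is established (by dominated convergence in the lag $h$ against the absolutely summable autocovariances, using only that $g$ is bounded and Riemann integrable plus a $q$-variation bound on the cross terms), the uniform estimate $\max_k|b_{n,k}|\le\|g\|_{\sup}\sum_j|\psi_j|$ immediately yields the Lindeberg condition. The only technical point to make explicit is that each row has infinitely many nonzero summands (all $k\le n$), so you should either cite a Lindeberg--Feller version for countable rows or insert a truncation at some $K_n$ with $\sum_{|k|>K_n}b_{n,k}^2=o(n)$; this is routine.

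One caution about your reserve Beveridge--Nelson route: the claim that $\tilde\psi_j=\sum_{\ell>j}\psi_\ell$ is summable is equivalent to $\sum_j j|\psi_j|<\infty$, which is strictly stronger than the hypothesis $\sum_j|\psi_j|<\infty$; under the stated assumption even $\sum_j\tilde\psi_j^2<\infty$ can fail (take $\psi_\ell\sim(\ell\log^2\ell)^{-1}$), so $\tilde\eta_i$ need not lie in $L^2$ and the Abel-summed remainder is not obviously controlled. That route therefore needs either an extra moment-type hypothesis on the filter or a truncated version of the decomposition, whereas both your main route and the paper's argument work under (\ref{filter:1}) alone.
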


\begin{proof}
Let $1\leq q<2$ and $g\in\cW_q[0,1]$.
For each $n\in\NN_{+}$ and $k\in\NN$, let
\begin{equation}\label{6CM1}
T_{nk}:=\sum_{i=1}^n\eta_{i-k}g\Big (
\frac{i}{n}\Big ).
\end{equation}
By (\ref{nun}) and (\ref{lin-process}) we
have the representation
\begin{eqnarray*}
\nu_n(g)&=&\sum_{i=1}^n\Big (
\sum_{k=0}^{\infty}\psi_k\eta_{i-k}\Big )
g\Big (\frac{i}{n}\Big )
=\sum_{k=0}^{\infty}\psi_kT_{nk}\\
&=&\sum_{k=0}^{\infty}\psi_k\big [T_{nk}
-T_{n0}\big ]+ A_{\psi}T_{n0}.
\end{eqnarray*}
Since function $g\in\cW_q[0,1]$, it is regulated (see e.g. \cite[p. 213]{DN99}). 
Thus $g^2$ is Riemann integrable, and so
$$
Var\Big (n^{-1/2}T_{n0}\Big )=
\frac{\sigma_{\eta}^2}{n}\sum_{i=1}^n
g^2\Big (\frac{i}{n}\Big )\to 
\sigma_{\eta}^2\int_0^1g^2\,d\lambda,\quad
\mbox{as $n\to\infty$.}
$$
Since $\nu$ is the isonormal Gaussian
processes on $\cL_2[0,1]$ it follows
by the Lindeberg central limit theorem that
$$
n^{-1/2}T_{n0}=\frac{1}{\sqrt{n}}
\sum_{i=1}^n\eta_{i}g\Big (\frac{i}{n}\Big )
\wc \sigma_{\eta}\nu (g),\quad\mbox{as
$n\to\infty$.}
$$
Therefore to prove (\ref{1CM1}),
due to Slutsky theorem, it is enough to
show that
\begin{equation}\label{5CM1}
R_n:=
\sum_{k=0}^{\infty}\frac{\psi_k}{\sqrt{n}}
\big [T_{nk}-T_{n0}\big ]\to 0
\quad\mbox{in probability $\PP$ as
$n\to\infty$.} 
\end{equation}
We will show that the following two
properties hold true:
\begin{equation}\label{2CM1}
\sup_{n,k}\frac{1}{n}\EE T_{nk}^2<\infty
\end{equation}
and
\begin{equation}\label{3CM1}
\mbox{for each $k\in\NN$}\quad
\lim_{n\to\infty}
\frac{1}{\sqrt{n}}|T_{nk}-T_{n0}|= 0
\quad\mbox{in probability $\PP$.} 
\end{equation}
For the moment
suppose that (\ref{2CM1}) and (\ref{3CM1})
hold true.
Let $\epsilon >0$ and $K\in\NN$.
Split the sum $R_n$ given by (\ref{5CM1}) into the sum with all $k\leq K$ and the
sum with all $k>K$ to get the 
inequality
\begin{eqnarray}
\lefteqn{\PP(\{|R_n|>\epsilon\})}\nonumber\\
&\leq&\PP\Big (\Big \{ \sum_{k=0}^K\frac{|\psi_k|}{
\sqrt{n}}|T_{nk}-T_{n0}|>\frac{\epsilon}{2}
\Big\}\Big )+
\PP\Big (\Big\{\sum_{k>K}\frac{|\psi_k|}{
\sqrt{n}}|T_{nk}-T_{n0}|>\frac{\epsilon}{2}
\Big\}\Big ).\label{4CM1}
\end{eqnarray}
Clearly we have the bound 
$$
\PP\Big (\Big\{\sum_{k>K}\frac{|\psi_k|}{
\sqrt{n}}|T_{nk}-T_{n0}|>\frac{\epsilon}{2}
\Big\}\Big )\leq\frac{4}{\epsilon}\sup_{n,k}
\Big (\frac{\EE T_{nk}^2}{n}\Big )^{1/2}
\sum_{k>K}|\psi_k|.
$$
By (\ref{2CM1}) and (\ref{filter:1}), 
taking $K\in\NN$ large
enough, one can make the right side of
the preceding bound as small as one wish.
Then the first probability on the right side
of (\ref{4CM1}) is small as one wish by
(\ref{3CM1}) and taking $n\in\NN_{+}$ 
large enough.
Therefore (\ref{5CM1}) holds true and
we are left to prove 
(\ref{2CM1}) and (\ref{3CM1}).

Recalling notation $T_{nk}$ given by
 (\ref{6CM1}), for each $n\in\NN_{+}$ and 
 $k\in\NN$, we have 
$$
\frac{1}{n}\EE T_{nk}^2=
\frac{\sigma_{\eta}^2}{n}\sum_{i=1}^n
g^2\Big (\frac{i}{n}\Big )\leq
\sigma_{\eta}^2\|g\|_{\sup}^2.
$$
This proves (\ref{2CM1}).
To prove (\ref{3CM1}) let $k\in\NN_{+}$.
Changing the index $i$ of summation 
it follows that the representation
$$  
T_{nk}-T_{n0}=\sum_{i=1-k}^0\eta_ig\Big (
\frac{i+k}{n}\Big )+\sum_{i=1}^{n-k}
\eta_i\Big [g\Big (\frac{i+k}{n}\Big ) -
g\Big (\frac{i}{n}\Big )\Big ]-
\sum_{i=n-k+1}^n\eta_ig\Big (\frac{i}{n}
\Big )
$$
holds for each integer $n>k$.
Since $g$ is bounded and $k$ is fixed 
the first and the third sum on the right side divided by $\sqrt{n}$ tend to zero 
in probability $\PP$ as $n\to\infty$.
For the second sum divided by $\sqrt{n}$
we apply the H\"older inequality 
$$
\Big |\frac{1}{\sqrt{n}}\sum_{i=1}^{n-k}
\eta_i\Big [g\Big (\frac{i+k}{n}\Big ) -
g\Big (\frac{i}{n}\Big )\Big ]\Big |
\leq \Big (n^{-\frac{p}{2}}\sum_{i=1}^{n-k}
|\eta_i|^p\Big )^{\frac{1}{p}}\Big (
\sum_{i=1}^{n-k}\Big |g\Big (\frac{i+k}{n}
\Big ) -g\Big (\frac{i}{n}\Big )\Big |^q
\Big )^{\frac{1}{q}}
$$ 
with $p\in\RR$ such that $\frac{1}{p}+
\frac{1}{q}\geq 1$.
The telescoping sum representation and
repeated application of Minkowski inequality
 for sums imply that the inequality
$$
\Big (\sum_{i=1}^{n-k}\Big |g\Big (
\frac{i+k}{n}\Big ) -g\Big (\frac{i}{n}
\Big )\Big |^q\Big )^{\frac{1}{q}}
\leq k\|g\|_{(q)}
$$
holds for each integer $n>k$.
Since $1\leq q<2$, then $(2/p)<1$.
Also, since $k$ is fixed and
 $\EE(|\eta_1|^p)^{\frac{2}{p}}=
\sigma_{\eta}^2<\infty$,
by Marcinkiewicz-Zygmund strong law of 
large numbers, we have
$$
\lim_{n\to\infty}n^{-\frac{p}{2}}
\sum_{i=1}^{n-k}|\eta_i|^p=0\quad
\mbox{with probability 1}.
$$
This completes the proof of (\ref{3CM1}).
Theorem \ref{CM1} is proved.
\end{proof}

By definition of Gaussian process $\nu$, 
 for any $g_1,\dots,g_d\in \cW_q$,
$(\nu (g_1),\dots, \nu (g_d))$ have a jointly normal 
distribution with covariance given by
the inner products $\smallint_0^1g_ig_j
\,d\lambda$, $i,j=1, \dots, d$.

\begin{proposition}\label{AR3}
Suppose $(X_i)_{i\in\ZZ}$ is a 
short-memory linear process and $\nu$ is
the isonormal Gaussian processes 
on $\cL_2[0,1]$.
If $g_1,\dots,g_d\in\cW_q[0,1]$ for some
$1\leq q<2$, then
\begin{equation}\label{1AR3}
n^{-1/2}\big (\nu_n(g_1),\dots,\nu_n(g_d)
\big )\wc \sigma_{\eta}A_{\psi}
(\nu (g_1),\dots,\nu (g_d)),\quad\mbox{as
$n\to\infty$.}
\end{equation}
\end{proposition}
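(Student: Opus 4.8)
The plan is to reduce the multivariate statement to the one-dimensional Theorem \ref{CM1} by the Cramér--Wold device, exploiting that both $\nu_n(\cdot)$ and the isonormal process $\nu$ act \emph{linearly} on their weight functions.

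First I would fix arbitrary real numbers $a_1,\dots,a_d$ and set $g:=\sum_{j=1}^d a_j g_j$. Since $\cW_q[0,1]$ is a vector space (a Banach space under $\|\cdot\|_{[q]}$), we have $g\in\cW_q[0,1]$ with the same exponent $1\le q<2$, so Theorem \ref{CM1} applies to $g$. From the definition \eqref{nun} of $\nu_n$ it is immediate that $f\mapsto\nu_n(f)$ is linear, whence $\sum_{j=1}^d a_j\nu_n(g_j)=\nu_n(g)$; likewise, because $\nu$ is the isonormal Gaussian process, $f\mapsto\nu(f)$ is a linear isometry from $L^2[0,1]$ into $L^2(\Omega,\cA,\PP)$, so $\nu(g)=\sum_{j=1}^d a_j\nu(g_j)$ almost surely.

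Next I would invoke Theorem \ref{CM1} for the single function $g$ to obtain $n^{-1/2}\nu_n(g)\wc\sigma_{\eta}A_{\psi}\,\nu(g)$, which by the two linearity identities above reads
$$
n^{-1/2}\sum_{j=1}^d a_j\nu_n(g_j)\;\wc\;\sigma_{\eta}A_{\psi}\sum_{j=1}^d a_j\nu(g_j)\qquad\mbox{as }n\to\infty.
$$
Thus every one-dimensional projection of the vector $n^{-1/2}\big(\nu_n(g_1),\dots,\nu_n(g_d)\big)$ converges in distribution to the corresponding projection of $\sigma_{\eta}A_{\psi}\big(\nu(g_1),\dots,\nu(g_d)\big)$. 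As recorded just before the proposition, $\big(\nu(g_1),\dots,\nu(g_d)\big)$ is jointly Gaussian with covariance matrix $\big(\ip{g_i,g_j}\big)_{i,j=1}^d$, so the candidate limit is a fixed probability measure on $\RR^d$ uniquely determined by its one-dimensional marginals; the Cramér--Wold theorem then delivers \eqref{1AR3}.

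There is no substantial obstacle: the only points requiring a word of care are verifying that the linear combination $\sum_j a_j g_j$ remains in $\cW_q[0,1]$ (so that Theorem \ref{CM1} is legitimately applicable), and noting that the already-established Gaussianity of $\big(\nu(g_1),\dots,\nu(g_d)\big)$ with the stated covariance is what makes the Cramér--Wold conclusion valid.
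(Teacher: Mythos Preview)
Your proof is correct and follows essentially the same route as the paper: both reduce to Theorem~\ref{CM1} via the Cram\'er--Wold device, using the linearity of $\nu_n$ on its weight function, the fact that $\sum_j a_j g_j\in\cW_q[0,1]$, and the linearity of the isonormal process $\nu$. The only difference is cosmetic---you spell out a couple of points (vector-space structure of $\cW_q[0,1]$, joint Gaussianity of the limit) that the paper leaves implicit.
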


\begin{proof} 
Let $d\in\NN_{+}$ and let $g_1,\dots,g_d\in 
\cW_q[0,1]$.
To prove (\ref{1AR3}) we use the
 Cram\'{e}r-Wold device.
Let $a=(a_1,\dots,a_d)\in\RR^d$.
Recalling definition (\ref{nun}) of $\nu_n$
we have
$$
\sum_{h=1}^da_h\nu_n(g_h)=\nu_n\Big (
\sum_{h=1}^da_hg_h\Big )
$$
for each $n\in\NN_{+}$.
Since $\sum_{h=1}^da_hg_h\in\cW_q[0,1]$
by Theorem \ref{CM1} it follows that
$$
n^{-1/2}\nu_n\Big (\sum_{h=1}^da_hg_h\Big )
\wc \sigma_{\eta} A_{\psi}
\nu\Big (\sum_{h=1}^da_hg_h\Big ),\quad
\mbox{as $n\to\infty$.}
$$
Due to linear isometry of $\nu$ the
convergence
$$
n^{-1/2}a{\cdot}\big (\nu_n(g_1),\dots,
\nu_n(g_d)\big )\wc \sigma_{\eta}A_{\psi}
a{\cdot}(\nu (g_1),\dots,\nu (g_d)),\quad
\mbox{as $n\to\infty$.}
$$
holds.
Since $a\in\RR^d$ is arbitrary,
(\ref{1AR3}) holds by the
 Cram\'{e}r-Wold device.
\end{proof}

\section{Asymptotic equicontinuity}\label{Equicontinuity}

Let $(\CLF,\rho)$ be a pseudometric space.
For each $n\in\NN_{+}=\{1,2,\dots\}$,
let $Z_{nk}$, $k\in\ZZ$, be independent
stochastic processes indexed by 
$f\in \CLF$
and defined on the product probability 
space $(\Omega_n,\CLA_n,\PP_n):=
\bigotimes_{k\in\ZZ}(\Omega_{nk},
\CLA_{nk},\PP_{nk})$ so that 
$Z_{nk}(f,\omega)
=Z_{nk}(f,\omega_k)$ for each $\omega
=(\omega_k)_{k\in\ZZ}$ and $f\in\CLF$.
For each $n\in\NN_{+}$ consider a 
stochastic process defined as a series
$$
\sum_{k\in\ZZ}Z_{nk}(f):=
\lim_{m\to +\infty}\sum_{k=-m}^m
Z_{nk}(f),\quad f\in\cF,
$$
provided the convergence holds almost 
surely.
We write $(Z_{nk})\in 
{\cal M} (\Omega_n,\CLA_n,P_n)$ if every one of the functions
 \begin{equation}\label{M1}
\omega\mapsto\sup\Big\{\Big |\sum_{k\in\ZZ}
e_k\big [Z_{nk}(f,\omega)-Z_{nk}(g,\omega)
\big ]\Big | \colon\,f,g\in\CLF,\,\,\rho (f,g)<\delta\Big\}
 \end{equation}
 and
 \begin{equation}\label{M2}
\omega\mapsto\sup\Big\{\Big |\sum_{k\in\ZZ}
e_k\big [Z_{nk}(f,\omega)-Z_{nk}(g,\omega)\big ]^2\Big | \colon\,f,g\in\CLF,\,\, 
\rho (f,g)<\delta\Big\}
 \end{equation}
is measurable for the completion of the 
probability space 
$(\Omega_n,\CLA_n,\PP_n)$,
for every $\delta >0$ and every tuple  $(e_k)_{k\in\ZZ}$
with $e_k\in\{-1,0,1\}$.

The following is Theorem 2.11.1 in \cite{VandW96} adopted to the
convergence of sums of linear processes.

 \begin{theorem}\label{VW}
 Let $(\CLF,\rho)$ be a totally bounded
pseudometric space.
Under the preceding notation assume that 
$(Z_{nk})\in {\cal M} 
(\Omega_n,{\cal A}_n,P_n)$
and there is a subsequence of positive 
integers $(m_n)_{n\in\NN_{+}}$ such that
\begin{equation}\label{4VW}
\lim_{n\to\infty}P_n^*\big(\big\{\big\|
\sum_{k<-m_n}Z_{nk}+\sum_{k>m_n}Z_{nk}
\big\|_{\cF}>\e\big\}\big)=0\quad
 \mbox{for every $\e >0$,}
\end{equation}
  \begin{equation}\label{1VW}
 \lim_{n\to\infty}\sum_{k=-m_n}^{m_n}
 E^{\ast}\|Z_{nk}\|_{{\cF}}^2\id_{\{\|
 Z_{nk}\|_{{\cF}}>\epsilon\}}= 0\quad
 \mbox{for every $\epsilon >0$,}
 \end{equation}
 \begin{equation}\label{2VW}
 \lim_{n\to\infty}\sup_{\rho (f,g)<
 \delta_n}\sum_{k=-m_n}^{m_n}
 E\big [Z_{nk}(f)-Z_{nk}(g)\big ]^2 
 =0\quad\mbox{for every
 $\delta_n\downarrow 0$,}
 \end{equation}
 \begin{equation}\label{3VW}
 \lim_{n\to\infty}\int_0^{\delta_n}
 \sqrt{\log N(x,
 \mathcal{F}, d_n)}\,dx = 0 \ \
 \textrm{in}\ \ P_n^*\ \ \textrm{for every}
 \ \ \delta_n\downarrow 0,
 \end{equation}
where  $d_n$ is a random pseudometric on
$\cF$ defined for each $n\in\NN_{+}$ and
$f,g\in\CLF$ by 
\begin{equation}\label{5VW}
 d_n(f, g):=\Big (\sum_{k=-m_n}^{m_n}
 \big [Z_{nk}(f)-Z_{nk}(g)\big ]^2
 \Big )^{1/2}.
\end{equation}
Then  $Z_n:=\sum_{k\in\ZZ}(Z_{nk}-EZ_{nk})$
 is asymptotically $\rho$-equicontinuous,
  that is for every $\e> 0$,
$$
\lim_{\delta \downarrow 0}\limsup_n 
P_n^*\big(\big\{ \sup\{|Z_n(f) - Z_n(g)|
\colon\,f,g\in\cF,\,\,\rho(f,g)<\delta\} 
> \e\big\}\big)=0.
$$ 
 \end{theorem}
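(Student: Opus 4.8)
The plan is to deduce Theorem \ref{VW} from the finite-array asymptotic equicontinuity theorem of van der Vaart and Wellner (Theorem 2.11.1 in \cite{VandW96}) by isolating the infinite tail of the series. For each $n$ I would split $Z_n=\sum_{k\in\ZZ}(Z_{nk}-\EE Z_{nk})$ into the central block $Z_n'=\sum_{|k|\le m_n}(Z_{nk}-\EE Z_{nk})$ and the tail $Z_n''=\sum_{|k|>m_n}(Z_{nk}-\EE Z_{nk})$, so that for every $\delta>0$ the oscillation modulus satisfies $\sup_{\rho(f,g)<\delta}|Z_n(f)-Z_n(g)|\le \sup_{\rho(f,g)<\delta}|Z_n'(f)-Z_n'(g)|+2\|Z_n''\|_{\cF}$. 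The term $\|Z_n''\|_{\cF}$ is controlled directly by hypothesis (\ref{4VW}), which gives $\|\sum_{k<-m_n}Z_{nk}+\sum_{k>m_n}Z_{nk}\|_{\cF}\to 0$ in $P_n^\ast$; the deterministic centering contribution $\|\sum_{|k|>m_n}\EE Z_{nk}\|_{\cF}$ is dominated, by Jensen's inequality, by $\EE^\ast\|\sum_{|k|>m_n}Z_{nk}\|_{\cF}$ and is in fact identically zero in the applications of interest, where each $Z_{nk}$ is already centred. Thus $\|Z_n''\|_{\cF}\to 0$ in outer probability, and the problem reduces to showing that the finite sum $Z_n'$ of independent processes is asymptotically $\rho$-equicontinuous.

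For the central block I would relabel the indices $-m_n\le k\le m_n$ as $1,\dots,2m_n+1$ and check that the hypotheses of Theorem 2.11.1 in \cite{VandW96} are met: the total boundedness of $(\cF,\rho)$ is assumed; the Lindeberg-type condition (\ref{1VW}) and the $L^2$-modulus condition (\ref{2VW}) are exactly its two numerical hypotheses; and the random entropy condition (\ref{3VW}), with $d_n$ defined by (\ref{5VW}) from the truncated block $-m_n\le k\le m_n$ only, is its entropy hypothesis. The delicate point is measurability: the assumption $(Z_{nk})\in{\cal M}(\Omega_n,\cA_n,P_n)$ guarantees that the suprema (\ref{M1}) and (\ref{M2}) — taken over pairs with $\rho(f,g)<\delta$ and over arbitrary sign patterns $(e_k)$ with $e_k\in\{-1,0,1\}$ — are measurable, which is precisely what is needed to carry out the symmetrization and multiplier (Rademacher randomization) inequalities and the chaining argument of \cite{VandW96} with genuine rather than outer expectations, and to make $d_n$ a bona fide measurable random pseudometric. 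With these verifications, Theorem 2.11.1 of \cite{VandW96} yields asymptotic $\rho$-equicontinuity of $Z_n'$, and combining this with the tail estimate of the previous paragraph gives the asserted asymptotic equicontinuity of $Z_n$.

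I expect the main obstacle to be bookkeeping rather than a genuinely new idea, since the analytic substance is packaged inside \cite{VandW96}. Specifically, the two things requiring care are: (a) confirming that passing from a finite array to a two-sided infinite index introduces nothing beyond the tail term, which is exactly what the extra hypothesis (\ref{4VW}) neutralizes, including the harmless centering piece; and (b) matching the class ${\cal M}$ against the precise measurability conditions (the "nearly measurable'' hypotheses) under which the van der Vaart--Wellner chaining and symmetrization steps remain valid. One must also keep track that the random entropy integral in (\ref{3VW}) is computed with respect to the truncated $d_n$ of (\ref{5VW}), so that the resulting chaining bound applies to $Z_n'$ and the tail of the series is genuinely absorbed into the $\|Z_n''\|_{\cF}$ term handled separately.
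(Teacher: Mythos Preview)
Your proposal is correct and follows essentially the same route as the paper's own proof: split $Z_n$ into the central block over $|k|\le m_n$ and the tail, apply Theorem~2.11.1 of \cite{VandW96} to the finite block (verifying that (\ref{1VW})--(\ref{3VW}) and the measurability class $\mathcal{M}$ supply its hypotheses), and absorb the tail via (\ref{4VW}). If anything, you are slightly more careful than the paper in flagging the deterministic centering $\sum_{|k|>m_n}\EE Z_{nk}$ in the tail piece; the paper's inequality tacitly drops it, which is justified in the intended application since each $Z_{nk}(f)=u_{nk}(f)\eta_k$ has mean zero.
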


\begin{proof}
Let $(m_n)$ be a subsequence of positive 
integers $(m_n)_{n\in\NN_{+}}$ such that
(\ref{4VW}) holds.
Clearly $(Z_{nk})_{-m_n\leq k\leq m_n}\in 
{\cal M} (\Omega_n,{\cal A}_n,P_n)$.
Using Theorem 2.11.1 in \cite{VandW96}
one can show that
$$
\lim_{\delta \downarrow 0}\limsup_n 
P_n^*\big(\big\{\sup\big\{\big |
\sum_{k=-m_n}^{m_n}[Z_{nk}(f) - Z_{nk}(g)]
\big|\colon\,f,g\in\cF,\,\,
\rho(f,g)<\delta\big\} > \e)=0
$$ 
for each $\e>0$.
For a given $\epsilon >0$ and for each
$n\in\NN_{+}$ we have
\begin{eqnarray*}
\lefteqn{P_n^*\big(\big\{\sup\{|Z_n(f) 
- Z_n(g)|\colon\,f,g\in\cF,\,\,\rho(f,g)
<\delta\} > \e\big\}\big )}\\
&&\leq
P_n^*\big(\big\{ \sup\big\{\big |
\sum_{k=-m_n}^{m_n}[Z_{nk}(f) - Z_{nk}(g)]
\big|\colon\,f,g\in\cF,\,\,\rho(f,g)<\delta
\big\}>\frac{\e}{2}\big\}\big)\\
&&\quad + P_n^*\big(\big\{\big\|
\sum_{k<-m_n}Z_{nk}+
\sum_{k>m_n}Z_{nk}\big \|_{\cF}>
\frac{\e}{4}\big\}\big ).
\end{eqnarray*}
By hypothesis (\ref{4VW}) the conclusion
follows.
\end{proof}

Since a sequence $X_1, X_2,\dots$ is a short-memory linear process,
a sequence of real numbers $(\psi_j)_{j\in\NN}$ is square summable, and so
each series in (\ref{lin-process}) converges almost surely by L\'evy's 
Equivalence Theorem (e.g. Theorem 9.7.1
in Dudley).
Letting $\psi_k:=0$ for each $k<0$ we obtain
the representation
\begin{equation}\label{ext-lin-process}
X_i=\sum_{k=-\infty}^i\psi_{i-k}\eta_k
=\sum_{k\in\ZZ} \psi_{i-k}\eta_k,\quad
i\in\ZZ.
\end{equation}

\begin{lemma}\label{representation}
Suppose $X_1,X_2,\dots$ is a linear process
given by {\rm (\ref{ext-lin-process})},
$f\colon\,[0,1]\to\RR$ and $\nu_n(f)$ 
is the $n$-th $f$-weighted partial sum 
given by {\rm (\ref{nun})}.
For each $n\in\NN_{+}$ and $k\in\ZZ$, let
$$
a_{nk}(f)=\sum_{i=1}^n
\psi_{i-k}f\Big (\frac{i}{n}\Big ),
$$
here $\psi_{i-k}=0$ if $i<k$.
Then for each $n\in \NN_{+}$
\begin{equation}\label{1representation}
\EE\nu_n^2(f)=\sigma_{\eta}^2\sum_{k\in\ZZ}
a_{nk}^2(f)\quad\mbox{and}\quad\nu_n(f)
=\sum_{k\in\ZZ}a_{nk}(f)\eta_k,
\end{equation}
where the random series converges almost
 surely.
\end{lemma}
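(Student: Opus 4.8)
The plan is to substitute the two-sided representation (\ref{ext-lin-process}) of $X_i$ into the definition (\ref{nun}) of $\nu_n(f)$ and to interchange the \emph{finite} sum over $i$ with the two-sided series over $k$. Observe first that $\nu_n(f)$ depends on $f$ only through the finitely many numbers $f(1/n),\dots,f(n/n)$, so $M_n:=\max_{1\le i\le n}|f(i/n)|$ is finite; this is the only boundedness that will be used. As a preliminary I would check that, for each fixed $n$, the coefficient sequence is square summable. By the Cauchy--Schwarz inequality applied to the length-$n$ sum defining $a_{nk}(f)$ one gets $a_{nk}^2(f)\le n\,M_n^2\sum_{i=1}^n\psi_{i-k}^2$, hence
\[
\sum_{k\in\ZZ}a_{nk}^2(f)\le n\,M_n^2\sum_{i=1}^n\sum_{k\in\ZZ}\psi_{i-k}^2=n^2M_n^2\sum_{j\ge 0}\psi_j^2<\infty,
\]
since $(\psi_j)_{j\in\NN}$ is square summable (for a short-memory process one may instead use $|a_{nk}(f)|\le M_n\sum_{i=1}^n|\psi_{i-k}|$ together with (\ref{filter:1})).

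Next I would prove the series identity. The random variables $a_{nk}(f)\eta_k$, $k\in\ZZ$, are independent and centred with summable variances $\sigma_\eta^2\sum_{k\in\ZZ}a_{nk}^2(f)<\infty$; hence, by Kolmogorov's convergence theorem (equivalently, by L\'evy's Equivalence Theorem as invoked above), the series $\sum_{k\in\ZZ}a_{nk}(f)\eta_k$ converges almost surely and in $L^2(\PP)$. To identify its sum with $\nu_n(f)$, recall that each series $X_i=\sum_{k\in\ZZ}\psi_{i-k}\eta_k$ converges a.s.; since the outer sum over $i$ is over the finite set $\{1,\dots,n\}$, a finite sum commutes with the limit defining these series, so for symmetric partial sums
\[
\sum_{i=1}^n f(i/n)\sum_{k=-m}^{m}\psi_{i-k}\eta_k
=\sum_{k=-m}^{m}\Big(\sum_{i=1}^n f(i/n)\psi_{i-k}\Big)\eta_k
=\sum_{k=-m}^{m}a_{nk}(f)\eta_k,
\]
and letting $m\to\infty$ the left side tends a.s. to $\sum_{i=1}^n f(i/n)X_i=\nu_n(f)$ while the right side tends to $\sum_{k\in\ZZ}a_{nk}(f)\eta_k$; this is the second identity in (\ref{1representation}).

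Finally, the variance identity: since the series converges to $\nu_n(f)$ in $L^2(\PP)$ and the $\eta_k$ are orthogonal with $\EE\eta_k^2=\sigma_\eta^2$, by orthogonality and continuity of the $L^2$-norm one obtains $\EE\nu_n^2(f)=\sum_{k\in\ZZ}a_{nk}^2(f)\,\EE\eta_k^2=\sigma_\eta^2\sum_{k\in\ZZ}a_{nk}^2(f)$. (Alternatively one can expand $\EE\nu_n^2(f)=\sum_{i,j=1}^n f(i/n)f(j/n)\,\EE(X_iX_j)$, use $\EE(X_iX_j)=\sigma_\eta^2\sum_{k\in\ZZ}\psi_{i-k}\psi_{j-k}$, and regroup the absolutely convergent double sum as $\sigma_\eta^2\sum_{k\in\ZZ}a_{nk}^2(f)$.) The only step needing any care is the interchange of summations above, but because the sum over $i$ is finite it is merely the commutation of a finite sum with a limit, so no Fubini-type argument is required; the rest is routine bookkeeping.
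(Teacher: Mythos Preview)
Your proposal is correct and follows essentially the same approach as the paper: establish square summability of $(a_{nk}(f))_{k\in\ZZ}$, justify the interchange of the finite sum over $i$ with the two-sided series over $k$, and invoke L\'evy's Equivalence Theorem for almost sure convergence. The only cosmetic difference is the order: the paper first proves the variance identity directly via the expansion $\EE\nu_n^2(f)=\sum_{i,j}f(i/n)f(j/n)\,\EE(X_iX_j)$ (which you list as your alternative) and reads off square summability from that, whereas you bound $\sum_k a_{nk}^2$ up front by Cauchy--Schwarz and then deduce the variance from $L^2$-convergence; both routes are equally short.
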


\begin{proof}
Let $n\in\NN_{+}$.
For each $i,j\in\{1,\dots,n\}$, 
since filter $(\psi_k)_{k\in\NN}$ is
square summable, the series representation
$$
\EE X_iX_j=\sigma_{\eta}^2\sum_{k\in\ZZ}
\psi_{i-k}\psi_{j-k}
$$
converges absolutely.
Thus we have
\begin{eqnarray*}
\EE\nu_n^2(f)&=&\sum_{i,j=1}^n\EE X_iX_j
f\Big (\frac{i}{n}\Big )f\Big (\frac{j}{n}
\Big )\\
&=&\sigma_{\eta}^2
\sum_{k\in\ZZ}\sum_{i,j=1}^n\psi_{i-k}
\psi_{j-k}f\Big (\frac{i}{n}\Big )
f\Big (\frac{j}{n}\Big )
=\sigma_{\eta}^2\sum_{k\in\ZZ}a_{nk}^2(f).
\end{eqnarray*}
and the series on the right side
converges.
This proves the first equality in
(\ref{1representation}).
The second one follows next
\begin{eqnarray*}
\nu_n(f)&=&\sum_{i=1}^n\Big [
\sum_{k\in\ZZ}\psi_{i-k}\eta_k\Big ]
f\Big (\frac{i}{n}\Big )\\
&=&\sum_{k\in\ZZ}\Big [
\sum_{i=1}^n\psi_{i-k}f\Big (\frac{i}{n}
\Big ) \Big ]\eta_k=\sum_{k\in\ZZ}a_{nk}(f)
\eta_k.
\end{eqnarray*}
The series on the right side converges 
almost surely by L\'evy's Equivalence
Theorem (9.7.1 theorem in \cite{Dudley6})
since $(a_{nk}(f))_{k\in\ZZ}$ is square
summable.
\end{proof}

\section{Proof of Theorem \ref{teo2a}}\label{MainResults}

As shown at the end of this section,
Theorem \ref{teo2a} is a simple corollary of the next theorem.
Following \cite[p. 267]{GandN16} we say
that a set of functions $\cF$ satisfies
the \emph{pointwise countable approximation
property} provided there exists a countable
subset $\cF_0\subset\cF$ such that every
$f$ in $\cF$ is a pointwise limit of
functions in $\cF_0$.
Given a probability measure $Q$ on $([0, 1], \cB_{[0, 1]})$, let $\rho_{2,Q}$
be a pseudometric on $\cF$ with values
$$
\rho_{2,Q}(f,g)=\Big(\int_{[0,1]}(f-g)^2dQ\Big )^{1/2},\quad f,g\in\cF
$$

\begin{theorem}\label{teo2} 
Let $X_1, X_2,\dots$ be a short-memory linear process given by {\rm
(\ref{lin-process})} and let $1 \le  q < 2$.
Suppose that a set of functions 
$\cF \subset \cW_q[0, 1]$ is bounded,
satisfies the pointwise countable 
approximation property and
\begin{equation}\label{teo2:condition}
\int^1_0\sup_{Q\in\cQ}\sqrt{\log N(x,\cF,
 \rho_{2,Q})}\, dx < \infty,
\end{equation}
where $\cQ$ is the set of all probability measures on $([0, 1], \cB_{[0, 1]})$.
There exists a version of
the isonormal Gaussian process $\nu$
restricted to $\cF$ with values in a
separable subset of $\ell^{\infty}(\cF)$,
it is measurable for the Borel sets on its
range and {\rm (\ref{1teo2})} holds.
\end{theorem}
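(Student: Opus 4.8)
The plan is to verify the three conditions $(i)$, $(ii)$, $(iii)$ of the asymptotic equicontinuity criterion recalled at the end of Section~\ref{Notation}. Condition $(i)$, convergence of the finite dimensional distributions of $n^{-1/2}\nu_n$ to those of $\sigma_\eta A_\psi\nu$, is precisely Proposition~\ref{AR3}, which applies since $\cF\subset\cW_q[0,1]$. For $(iii)$ I would note that $\lambda\in\cQ$ and $\rho_{2,\lambda}=\rho_2$, so (\ref{teo2:condition}) yields $\int_0^1\sqrt{\log N(x,\cF,\rho_2)}\,dx<\infty$; in particular $N(x,\cF,\rho_2)<\infty$ for every $x>0$, i.e.\ $(\cF,\rho_2)$ is totally bounded. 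The same bound is condition (\ref{ME}), so the isonormal process $\nu$ restricted to $\cF$ admits, as explained around (\ref{ME}), a version with almost all sample paths in $UC(\cF,\rho_2)$, a separable subset of $\ell^\infty(\cF)$, measurable for the Borel sets on its range; this settles the existence statement, and once $(ii)$ is established the criterion gives (\ref{1teo2}), the limit being identified through its covariance $\sigma_\eta^2 A_\psi^2\ip{f,g}$ as in Proposition~\ref{AR3}.

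\textbf{The core: asymptotic $\rho_2$-equicontinuity.} I would obtain $(ii)$ from Theorem~\ref{VW}. Using Lemma~\ref{representation}, put $Z_{nk}(f):=n^{-1/2}a_{nk}(f)\eta_k$ with $a_{nk}(f)=\sum_{i=1}^n\psi_{i-k}f(i/n)$, so that the $Z_{nk}$, $k\in\ZZ$, are independent, $Z_{nk}(f,\omega)=Z_{nk}(f,\omega_k)$, $\EE Z_{nk}=0$, and $\sum_{k\in\ZZ}(Z_{nk}-\EE Z_{nk})=n^{-1/2}\nu_n$ almost surely. Membership $(Z_{nk})\in{\cal M}(\Omega_n,\cA_n,\PP_n)$ follows from the pointwise countable approximation property: $Z_{nk}(f,\omega)$ depends on $f$ only through the finite tuple $(f(i/n))_{1\le i\le n}$ and continuously so, so the suprema in (\ref{M1}) and (\ref{M2}) over $\{\rho_2(f,g)<\delta\}$ may be computed over $\cF_0\times\cF_0$ and are measurable. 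It then remains to pick $m_n\to\infty$ and verify (\ref{4VW})--(\ref{3VW}); note $a_{nk}(f)=0$ for $k>n$, so the cut-off $|k|\le m_n$ discards only the far-past innovations $k<-m_n$.

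\textbf{Conditions (\ref{1VW})--(\ref{3VW}).} These I expect to be routine. For (\ref{1VW}) one uses $\|Z_{nk}\|_\cF\le n^{-1/2}|\eta_k|\,C_\cF\sum_{i=1}^n|\psi_{i-k}|$ with $C_\cF=\sup_{f\in\cF}\|f\|_{\sup}$ and $\sum_i|\psi_{i-k}|\le\sum_j|\psi_j|$, together with $\EE\eta_1^2<\infty$, so the Lindeberg sum vanishes provided $m_n$ does not grow too fast (e.g.\ $m_n$ of order $n$ is safe). For (\ref{2VW}) one has $\sum_{|k|\le m_n}\EE[Z_{nk}(f)-Z_{nk}(g)]^2\le n^{-1}\EE\nu_n^2(f-g)=\sigma_\eta^2 n^{-1}\sum_{i,i'=1}^n(f-g)(i/n)(f-g)(i'/n)\gamma_{i-i'}$, where $\gamma_\ell=\sum_k\psi_k\psi_{k+\ell}$ satisfies $\sum_\ell|\gamma_\ell|\le(\sum_j|\psi_j|)^2$; hence the left side is at most $(\sum_\ell|\gamma_\ell|)\sigma_\eta^2 n^{-1}\sum_i(f-g)(i/n)^2$, and $|n^{-1}\sum_i g(i/n)-\int_0^1 g\,d\lambda|\le n^{-1/q}\|g\|_{(q)}$ for $g\in\cW_q$ (applied to $g=(f-g)^2$, which ranges over a bounded subset of $\cW_q$), so the supremum over $\{\rho_2(f,g)<\delta_n\}$ tends to $0$. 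For (\ref{3VW}), where (\ref{teo2:condition}) is used, a weighted Cauchy--Schwarz bound gives $d_n(f,g)^2=n^{-1}\sum_{|k|\le m_n}\eta_k^2 a_{nk}(f-g)^2\le\|\psi\|_1\big(n^{-1}\sum_{i=1}^n\xi_i\big)\rho_{2,Q_n}(f,g)^2$, where $\xi_i=\sum_{j\ge 0}|\psi_j|\eta_{i-j}^2$ and $Q_n=\big(\sum_i\xi_i\big)^{-1}\sum_i\xi_i\delta_{i/n}$ is a random probability measure on $[0,1]$; since $(\xi_i)$ is stationary, nonnegative and ergodic with $\EE\xi_1=\|\psi\|_1\sigma_\eta^2$, Birkhoff's theorem makes $\Lambda_n^2:=\|\psi\|_1 n^{-1}\sum_i\xi_i$ bounded and bounded away from $0$ with probability tending to $1$, whence $N(x,\cF,d_n)\le\sup_{Q\in\cQ}N(x/\Lambda_n,\cF,\rho_{2,Q})$ and $\int_0^{\delta_n}\sqrt{\log N(x,\cF,d_n)}\,dx\to 0$ in $P_n^*$ by (\ref{teo2:condition}).

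\textbf{Main obstacle: condition (\ref{4VW}).} The delicate step is showing that the far-past tail $\sum_{k<-m_n}Z_{nk}(f)=n^{-1/2}\sum_{i=1}^n f(i/n)\tilde R_i$, with $\tilde R_i=\sum_{k<-m_n}\psi_{i-k}\eta_k$, is negligible \emph{uniformly} in $f\in\cF$. Summation by parts against $f$, combined with $\|f\|_{[q]}\le$ const and H\"older with $p^{-1}+q^{-1}=1$, $p>2$, reduces $\sup_f|\cdot|$ to controlling $n^{-1/2}|U_n|$ and $n^{-1/2}\big(\sum_{i\le n}|U_i|^p\big)^{1/p}$ for the partial sums $U_i=\sum_{l\le i}\tilde R_l$, whose variances are of order $i$ times the square of the filter mass beyond $m_n$. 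The difficulty is that this quantity has to be made $o(1)$ while $m_n$ is simultaneously kept small enough for (\ref{1VW}), so crude moment/$\ell^1$ bounds (which only give $O(1)$) do not suffice: one must exploit cancellation --- through the $q$-variation structure of $\cF$ and a maximal/chaining inequality for the $\ell^\infty(\cF)$-supremum controlled via the entropy condition (\ref{teo2:condition}) --- and choose the cut-off $m_n$ carefully. Once (\ref{4VW}) holds, Theorem~\ref{VW} delivers $(ii)$, and Theorem~\ref{teo2} follows from $(i)$--$(iii)$ by the equicontinuity criterion; Theorem~\ref{teo2a} is then recovered by applying Theorem~\ref{teo2} to (a pointwise-countably-approximable enlargement of) $\cF$, using that a bounded subset of $\cW_q[0,1]$ with $1\le q<2$ satisfies (\ref{teo2:condition}), as recalled after the definition of the $p$-variation norm.
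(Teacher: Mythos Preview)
Your overall architecture is the paper's: verify $(i)$ via Proposition~\ref{AR3}, $(iii)$ via the entropy condition with $Q=\lambda$, and $(ii)$ via Theorem~\ref{VW} applied to $Z_{nk}(f)=n^{-1/2}a_{nk}(f)\eta_k$. Your arguments for measurability, (\ref{2VW}) and (\ref{3VW}) differ in detail from the paper (which routes both through the Khinchin--Kahane inequality and the rearrangement (\ref{teo2:proof:eq4})), but your covariance/Cauchy--Schwarz route is sound and arguably more direct.

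Where you go astray is in declaring (\ref{4VW}) the ``main obstacle'' and proposing summation by parts, maximal inequalities and chaining to handle it. In the paper (\ref{4VW}) is the \emph{easiest} hypothesis: since $\|u_{nk}\|_\cF\le n^{-1/2}\|\FF_\cF\|_{\sup}\sum_{i=1}^n|\psi_{i-k}|$, summing over $k\le -m$ gives
\[
\sum_{k<-m}\|u_{nk}\|_\cF\le \|\FF_\cF\|_{\sup}\,\sqrt{n}\sum_{j>m}|\psi_j|,
\]
so choosing $m_n$ with $\sum_{j>m_n}|\psi_j|\le 1/n$ yields $\sum_{k<-m_n}\|u_{nk}\|_\cF\le\|\FF_\cF\|_{\sup}/\sqrt{n}$, and (\ref{4VW}) follows by Markov's inequality. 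Your assertion that ``crude $\ell^1$ bounds only give $O(1)$'' is simply wrong: the $\ell^1$ bound gives $O(\sqrt{n}\sum_{j>m_n}|\psi_j|)$, which is made $o(1)$ by taking $m_n$ large.

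The tension you perceive with (\ref{1VW}) is an artifact of your crude bound there. You bound each $\|u_{nk}\|_\cF$ by $n^{-1/2}C_\cF\|\psi\|_1$, which after squaring and summing over $|k|\le m_n$ forces $m_n=O(n)$. The paper instead proves the \emph{uniform} bound $\sup_n\sum_{k\in\ZZ}\|u_{nk}\|_\cF^2<\infty$ (equation (\ref{teo2:proof:eq1})), obtained by recognising $\sum_k(\sum_i|\psi_{i-k}|)^2$ as $\sigma_\eta^{-2}\EE(\sum_{i=1}^n\tilde X_i)^2$ for the stationary process $\tilde X_i=\sum_j|\psi_j|\eta_{i-j}$ and bounding this by $(\sum_j|\psi_j|)^2n$. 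With this bound, (\ref{1VW}) follows from $\EE\eta_0^2\id_{\{|\eta_0|>c\sqrt{n}\}}\to 0$ with no constraint on $m_n$ whatsoever, and the choice of $m_n$ is dictated solely by (\ref{4VW}). Once you replace your treatment of (\ref{1VW}) by this stationarity argument, the ``obstacle'' disappears and no chaining or $q$-variation structure is needed for (\ref{4VW}).
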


Since $\cF\subset\cW_q[0,1]$ with $q\in [1,2)$, the finite dimensional
distributions of $n^{-1/2}\nu_n$ converge in distribution to the corresponding
finite dimensional distributions of $\nu$ by Proposition \ref{AR3}.
By hypothesis (\ref{teo2:condition}), $\cF$
is totally bounded with respect to 
pseudometric $\rho_2$.
Therefore to prove Theorem \ref{teo2}
we have to show that $n^{-1/2}\nu_n$ is
asymptotically equicontinuous with respect
to $\rho_2$.
To this end we use Theorem \ref{VW}.

For each $n\in\NN_{+}$, $k\in\ZZ$ and
$f\colon\,[0,1]\to\RR$, let
\begin{equation}\label{teo2:proof:eq7}
u_{nk}(f):=\frac{1}{\sqrt{n}}\sum_{i=1}^n
\psi_{i-k}f\Big (\frac{i}{n}\Big )
=\frac{a_{nk}(f)}{\sqrt{n}},
\end{equation}
here $\psi_{i-k}=0$ if $i<k$.
By Lemma \ref{representation} we have
useful series representation
\begin{equation}\label{teo2:proof:eq5}
\frac{\nu_n(f)}{\sqrt{n}}
=\sum_{k\in\ZZ}u_{nk}(f)\eta_k,
\end{equation}
We apply Theorem \ref{VW} to the sequence 
of processes
\begin{equation}\label{teo2:proof:eq9}
Z_{nk}=\big\{Z_{nk}(f):=u_{nk}(f)\eta_k
\colon\,f\in \cF\big\},\quad 
k\in\ZZ,\,\,n\in\NN_{+}.  
\end{equation}

\subsection{Measurability}

We can and do assume that 
$(\eta_k)_{k\in\ZZ}$ is defined on the
product probability space
$$
(\Omega,\cA,\PP)=\bigotimes_{k\in\ZZ}
(\Omega_k,\cA_k,\PP_k)
$$
with its joint distribution equal to
the product of distributions of $\eta_k$.
We will show that $(Z_{nk})_{k\in\ZZ}\in 
{\cal M} (\Omega,\cA,\PP)$ using
the fact that $\cF$ satisfies the
pointwise countable approximation
property.

Given a tuple $e=(e_k)_{k\in\ZZ}$ with
$e_k\in\{-1,0,1\}$, for each 
$i\in\{1,\dots,n\}$ and $\omega\in\Omega$,
let
$$
X_i^e(\omega):=\sum_{k\in\ZZ}e_k\psi_{i-k}
\eta_k(\omega).
$$
By (\ref{teo2:proof:eq9}) and
(\ref{teo2:proof:eq7}), for each pair
$f,g\in\cF$, $n\in\NN_{+}$ and
$\omega\in\Omega$, we have
$$
\sum_{k\in\ZZ}e_k\big [Z_{nk}(f,\omega)-
Z_{nk}(g,\omega)\big ]
=\frac{1}{\sqrt{n}}\sum_{i=1}^nX_i^e(\omega
)(f-g)\Big (\frac{i}{n}\Big )=:
T_n^e(f,g,\omega).
$$
For each $\delta >0$, let $\cF^{\delta}
:=\{(f,g)\in\cF\times\cF\colon\,\rho_2
(f,g)<\delta\}$.
Let $\cF_0\subset\cF$ be a countable set
such that every $f\in\cF$ is a pointwise 
limit of functions in $\cF_0$.
Then (\ref{M1}) with $\cF_0$ in place of
$\cF$ is measurable and
$$
\PP^{\ast}\left\{\sup\{|T_n^e(f,g,\cdot)|
\colon\,(f,g)\in\cF^{\delta}\}
\not =
\sup\{|T_n^e(f,g,\cdot)|\colon\,(f,g)
\in\cF_0^{\delta}\}\right\}=0
$$
for each $\delta >0$, each $e=(e_k)_{k\in
\ZZ}$ and each $n\in\NN_{+}$.
Therefore the function (\ref{M1}) is
measurable.

Measurability of (\ref{M2}) follows 
similarly once we show that the series
$$
\omega\mapsto 
\sum_{k\in\ZZ}\big [Z_{nk}(f,\omega)
\big ]^2=\sum_{k\in\ZZ}u_{nk}^2(f)
\eta_k^2(\omega)
$$
converges for each 
$f\in\cF$ and $n\in\NN_{+}$.
But this true due to Lemma
\ref{representation} and due to the
fact that
$$
\EE\left [\sum_{k\in\ZZ}u_{nk}^2(f)\eta_k^2
\right ] = \frac{\sigma_{\eta}^2}{n}
\sum_{k\in\ZZ}a_{nk}^2(f)<\infty.
$$
Therefore $(Z_{nk})_{k\in\ZZ}\in 
{\cal M} (\Omega,\cA,\PP)$.

\subsection{Hypothesis (\ref{4VW})}

By definition, for each $n\in\NN_{+}$
we have $u_{nk}=0$ for each $k>n$.
Therefore $\sum_{k>n}\|u_{nk}\|_{\cF}=0$
for each $n\in\NN_{+}$.
We will choose a subsequence of positive
integers $(m_n)_{n\in\NN_{+}}$ such that
\begin{equation}\label{teo2:proof:eq6}
\lim_{n\to\infty}\sum_{-\infty<k< -m_n}
\|u_{nk}\|_{\cF}=0.
\end{equation}
Let $\FF_{\cF}$ be the function on $[0,1]$
with values
$$
\FF_{\cF}(x):= \sup\{|f(x)|: f \in \cF\},\ \  x\in [0, 1]. 
$$
Since $\cF$ is bounded in $\cW_q[0,1]$,
then $\|\FF_{\cF}\|_{\sup}<\infty$.
By (\ref{teo2:proof:eq7}), for each 
$n\in\NN_{+}$, $k\in\ZZ$ and $f\in\cF$ we have
\begin{equation}\label{teo2:proof:eq8}
|u_{nk}(f)|\leq\frac{1}{\sqrt{n}}
\sum_{i=1}^n|\psi_{i-k}|\Big |f\Big (
\frac{i}{n}\Big
)\Big |\leq\frac{\|\FF_{\cf}\|_{\sup}
}{\sqrt{n}}\sum_{i=1}^n|\psi_{i-k}|.
\end{equation}
Let $0\leq m<M$.
Then
\begin{eqnarray*}
\sum_{-M\leq k\leq -m}\|u_{nk}\|_{\cF}
&\leq&\frac{\|\FF_{\cF}\|_{\sup}}{\sqrt{n}}
\sum_{-M\leq k\leq -m}\sum_{i=1}^n
|\psi_{i-k}|\\
&=&\frac{\|\FF_{\cF}\|_{\sup}}{\sqrt{n}}
\sum_{i=1}^n\sum_{-M\leq k\leq -m}
|\psi_{i-k}|\\
&\leq&\frac{\|\FF_{\cF}\|_{\sup}}{\sqrt{n}}
\sum_{i=1}^n\sum_{j\geq i+m}|\psi_j|
\leq\frac{\|\FF_{\cF}\|_{\sup}}{\sqrt{n}}n
\sum_{j\geq 1+m}|\psi_j|.
\end{eqnarray*}
Now, one can choose a subsequence of
positive integers $(m_n)_{n\in\NN_{+}}$
such that
$\sum_{j\geq 1+m_n}|\psi_j|\leq n^{-1}$
for each $n\in\NN_{+}$.
Hence
$$
\sum_{-\infty<k\leq -m_n}\|u_{nk}\|_{\cF}
\leq \frac{\|\FF_{\cF}\|_{\sup}}{\sqrt{n}}
$$
for each $n\in\NN_{+}$,
and so (\ref{teo2:proof:eq6}) holds.
One can assume that $m_n>n$ for each 
$n\in\NN_{+}$, and so (\ref{4VW}) holds
with the subsequence $(m_n)$.

\subsection{Hypothesis (\ref{1VW})}

To establish hypothesis (\ref{1VW}) 
it is enough to prove that
\begin{equation}\label{teo2:proof:eq1}
U:=\sup_{n\ge 1}\Big (\sum_{k\in\ZZ}
\|u_{nk}\|_{\cF}^2\Big )<\infty.
\end{equation}
Indeed, suppose it is true.
By  (\ref{teo2:proof:eq8}) and assumption
(\ref{filter:1}) we  have
$$
\|u_{nk}\|_{\cF}\leq\frac{\|\FF_{\cf}
\|_{\sup}}{\sqrt{n}}\sum_{j\in\NN}
|\psi_{j}|=:\frac{c}{\sqrt{n}}.
$$
By (\ref{teo2:proof:eq9}), for each 
$m,n\in\NN_{+}$ and $\e>0$ we have
\begin{eqnarray*}
\sum_{k=-m}^mE^{\ast}\|Z_{nk}
\|_{{\cF}}^2\id_{\{\| Z_{nk}\|_{{\cF}}>
\e\}}
&\leq&\sum_{k=-m}^m\|u_{nk}\|_{\cF}^2 
\EE\eta_k^2\id_{\{\| u_{nk}\|_{{\cF}}
|\eta_k|>\e\}}\\
&\leq& U\EE\eta_0^2\id_{\{|\eta_0|>c^{-1}\e
\sqrt{n}\}}.
\end{eqnarray*}
This yields (\ref{1VW}).
We are left to prove
 (\ref{teo2:proof:eq1}).

By (\ref{teo2:proof:eq8}) it is enough
to prove that
\begin{equation}\label{teo2:proof:eq2}
\sup_{n\ge 1}\frac{1}{n}\sum_{k\in \eZ}
\Big(\sum_{i=1}^n |\psi_{i-k}|\Big)^2
<\infty.
\end{equation}
For each $i\in\ZZ$ let
$$
\wt{X}_i=\sum_{k\in\NN}
|\psi_{k}|\eta_{i-k}.
$$
Then
\begin{align*}
\sum_{k\in \eZ}\Big(\sum_{i=1}^n |\psi_{i-k}|\Big)^2&=
E\Big(\sum_{k\in \eZ}\Big(\sum_{i=1}^n|\psi_{i-k}|\Big)\eta_k\Big)^2\\
&=E\Big(\sum_{i=1}^n\sum_{k\in \eZ}|\psi_{i-k}|\eta_k\Big)^2
=E\Big(\sum_{i=1}^n \wt{X}_i\Big)^2.
\end{align*}
Since the linear process $(\wt{X}_i)$ is
covariance stationary, we have
\begin{align*}
E\Big(\sum_{i=1}^n \wt{X}_i\Big)^2&=\sum_{i,j=1}^n E(\wt{X}_i\wt{X}_j)
=n\sum_{j=-(n-1)}^{n-1}\Big (1-
\frac{|j|}{n}\Big )E(\wt{X}_0\wt{X}_j)\\
&\le n\sum_{j=0}^n |E(\wt{X}_j\wt{X}_0)|
=n\sigma^2\sum_{j=0}^n\sum_{k=0}^\infty|\psi_{k+j}|\cdot|\psi_k|\\
&\le \sigma^2\Big(\sum_{k=0}^\infty|\psi_k|\Big)^2 n.
\end{align*}
Due to assumption (\ref{filter:1}),
this completes the proof of (\ref{teo2:proof:eq2}).

\subsection{Hypothesis (\ref{2VW})}

To prove hypotheses (\ref{2VW}) and (\ref{3VW}) 
we use the following representation of the
series (\ref{teo2:proof:eq5}).
For a sequence $(t_k)_{k\in\ZZ}$ of real 
numbers such that 
$\sum_{k\geq 0}\psi_kt_{i-k}$
converges for each $i\in\NN_{+}$, 
the series $\sum_{k\in\ZZ}\psi_{i-k}t_k$
also converges (here $\psi_k=0$ for $k<0$),
 and for each $n\in\NN_{+}$ we have
\begin{eqnarray}
\sum_{k\in\ZZ}\Big [\sum_{i=1}^n\psi_{i-k}f
\Big (\frac{i}{n}\Big )\Big ]t_k
&=&\sum_{i=1}^n\Big [\sum_{k\in\ZZ}
\psi_{i-k}t_k
\Big ]f\Big (\frac{i}{n}\Big )\nonumber\\
&=&\sum_{i=1}^n\Big [\sum_{k=0}^{\infty}
\psi_{k}
t_{i-k}\Big ]f\Big (\frac{i}{n}\Big )
=\sum_{k=0}^{\infty}\psi_k\Big [
\sum_{i=1}^nt_{i-k}f\Big (\frac{i}{n}\Big )
\Big ].\label{teo2:proof:eq4}
\end{eqnarray}

Now to establish hypothesis (\ref{2VW})
recall  ((\ref{teo2:proof:eq7}) and
(\ref{teo2:proof:eq9})) that 
$$
E[Z_{nk}(f)-Z_{nk}(g)]^2
=E[Z_{nk}(f-g)]^2=
\frac{\sigma^2}{n}\Big(\sum_{i=1}^n
\psi_{i-k}(f-g)\Big(
\frac{i}{n}\Big )\Big)^2.
$$
for all $f,g\in \cF$, $n\in\NN_{+}$ and 
$k\in \eZ$.
Let $(r_k)_{k\in\ZZ}$ be a Rademacher
sequence, $h\in\cF$ and $n\in\NN_{+}$. 
By Khinchin-Kahane inequality with the 
constant $K$ we have
\begin{eqnarray*}
\sum_{k\in\ZZ}
\Big(\sum_{i=1}^n\psi_{i-k}h
\Big(\frac{i}{n}\Big )\Big)^2
&=&E\Big (\sum_{k\in\ZZ}
\Big(\sum_{i=1}^n\psi_{i-k}h
\Big(\frac{i}{n}\Big )\Big )r_k\Big )^2\\
&\leq& K^2\Big (E\Big |\sum_{k\in\ZZ}
\Big(\sum_{i=1}^n\psi_{i-k}h
\Big(\frac{i}{n}\Big )\Big )r_k\Big |
\Big )^2.
\end{eqnarray*}
The series on the right side converges and
has representation (\ref{teo2:proof:eq4}) 
with $t_k=r_k(\omega)$.
Therefore
$$
E\Big |\sum_{k\in\ZZ}
\Big(\sum_{i=1}^n\psi_{i-k}h
\Big(\frac{i}{n}\Big )\Big )r_k\Big |
=E\Big |\sum_{k=0}^{\infty}\psi_k
\Big (\sum_{i=1}^nr_{i-k}h
\Big(\frac{i}{n}\Big )\Big )\Big |
$$
$$
\leq\sum_{k=0}^{\infty}|\psi_k|E\Big |
\sum_{i=1}^nr_{i-k}h
\Big(\frac{i}{n}\Big )\Big |
\leq\sum_{k=0}^{\infty}|\psi_k|
\Big (\sum_{i=1}^n
h^2\Big(\frac{i}{n}\Big )\Big )^{1/2}.
$$
Using Minkowski inequality for integrals
and then Minkowski inequality for sums we 
obtain
\begin{align}
\lefteqn{\Big(\frac{1}{n}\sum^n_{i=1}h^2\Big(\frac{i}{n} \Big )\Big)^{1/2}=
\Big(\sum^n_{i=1}\int^{i/n}_{(i-1)/n}
\Big[h\Big (\frac{i}{n}\Big )- h(t) + h(t)
 \Big]^2dt\Big)^{1/2}}\nonumber\\
&\le \Big(\sum^n_{i=1}\Big \{\Big (
\int^{i/n}_{(i-1)/n}\Big[h\Big (\frac{i}{n}
\Big )- h(t)\Big ]^2\,dt\Big )^{1/2}+
\Big (\int_{(i-1)/n}^{i/n}h^2(t)\,
dt\Big )^{1/2}\Big \}^2\Big )^{1/2}
\nonumber\\
&\le\Big(\frac{1}{n}\sum^n_{i=1}\sup\Big\{
\Big [h\Big (\frac{i}{n}\Big )-h(t)\Big ]^2:
 t \in \Big [\frac{i-1}{n},\frac{i}{n}
 \Big ]\Big\}\Big)^{1/2}+\Big(\int^1_0 
 h^2(t) dt\Big)^{1/2}.\label{teo2:proof:eq3}
\end{align}
For each $i\in\{1,\dots,n\}$, we have the bound
$$
\sup\Big\{\Big [h\Big (\frac{i}{n}\Big )-
h(t)\Big ]^2\colon\, t \in \Big [
\frac{i-1}{n},\frac{i}{n} \Big ]\Big\}
\le v_2\Big(h;\Big [\frac{i-1}{n},
\frac{i}{n}\Big ]\Big ).
$$
Summing the bounds over $i$ and continuing
to bound the right side of
 (\ref{teo2:proof:eq3}) it follows that
$$
\Big(\frac{1}{n}\sum^n_{i=1}h^2\Big(
\frac{i}{n} \Big )\Big)^{1/2}
\le n^{-1/2}||h||_{(2)} + \rho_2(h, 0).
$$
Summing up the preceding inequalities and
replacing $h$ by $f-g$ it follows that
$$
\Big (\sum_{k\in\ZZ}E[Z_{nk}(f)
-Z_{nk}(g)]^2\Big)^{1/2}
\le \sigma K\Big(\sum_{k=0}^{\infty}
|\psi_k|\Big)
\Big [\frac{||f-g||_{(2)}}{\sqrt{n}}+
\rho_2(f,g)\Big].
$$
Since $\cF\subset\cW_q[0,1]\subset
\cW_2[0,1]$, this proves hypothesis
 (\ref{2VW}).

\subsection{Hypothesis (\ref{3VW})}

To establish hypothesis (\ref{3VW})
recall the random pseudo-metric
$d_n(f,g)$ defined by (\ref{5VW}).
Since the function $f\mapsto Z_{nk}(f)$
defined by (\ref{teo2:proof:eq9})
is linear, for simplicity, consider instead 
$p_n(f):=d_n(f,0)$ for each $f\in\cF$.
Let $n\in\NN_{+}$, $f\in\cF$ and let
$(r_k)_{k\in\ZZ}$ be a Rademacher
sequence.
By Khinchin-Kahane inequality with the 
constant $K$ again, we have
\begin{equation}\label{teo2:proof:eq10}
p_n^2(f)=\sum_{k\in\ZZ}u^2_{nk}(f)\eta_k^2
=E_r\Big(\sum_{k\in\ZZ}u_{nk}(f)\eta_kr_k
\Big)^2\le K^2\Big(E_r\Big|\sum_{k\in\ZZ}
u_{nk}(f)\eta_kr_k\Big|\Big)^2.
\end{equation}
Now recall notation (\ref{teo2:proof:eq7})
for $u_{nk}(f)$.
Expression (\ref{teo2:proof:eq4})
with $t_k=\eta_k(\omega_1)r_k(\omega_2)$
for the series on the right side
gives equality
$$
\sum_{k\in\ZZ}u_{nk}(f)\eta_kr_k
=\frac{1}{\sqrt{n}}\sum_{k=0}^{\infty}
\psi_k\sum_{i=1}^nf\Big (\frac{i}{n}\Big )
\eta_{i-k}r_{i-k}.
$$
Continuing (\ref{teo2:proof:eq10}) with
this representation we obtain
\begin{align}
p_n(f)&\le\frac{K}{\sqrt{n}}
E_r\Big|\sum_{k=0}^\infty \psi_k 
\sum_{i=1}^nf\Big (\frac{i}{n}\Big ) 
\eta_{i-k}r_{i-k}\Big|
\le \frac{K}{\sqrt{n}}
\sum_{k=0}^\infty |\psi_k| E_r\Big|\sum_{i=1}^n f\Big (\frac{i}{n}\Big )
\eta_{i-k}r_{i-k} 
\Big|\label{teo2:proof:eq11}\\
&\le\frac{K}{\sqrt{n}}
\sum_{k=0}^{\infty}|\psi_k|\Big(
\sum_{i=1}^nf^2\Big (\frac{i}{n}\Big )
\eta_{i-k}^2\Big)^{1/2}
\leq \frac{K}{\sqrt{n}}
\Big (\sum_{k=0}^{\infty}|\psi_k|
\Big )^{1/2}\Big (\sum_{k=0}^{\infty}
|\psi_k|\sum_{i=1}^nf^2\Big (
\frac{i}{n}\Big )\eta_{i-k}^2
\Big)^{1/2}.\nonumber
\end{align}
The last inequality is H\"older's
 inequality.
On $([0,1],\cB)$ define random measures
$\mu_n$ by
$$
\mu_n(B):=\sum_{k=0}^{\infty}|\psi_k|
\frac{1}{n}\sum_{i=1}^n\eta_{i-k}^2
\delta_{i/n}(B),\quad B\in\cB,\,\,n\in
\NN_{+}.
$$
Since $\sigma_{\eta}\not =0$, 
given $\epsilon >0$ one can find 
$\Omega_{\epsilon}\subset\Omega$ and 
$n_{\epsilon}\in\NN$ such that 
$\PP (\Omega_{\epsilon})<\epsilon$ and
$\mu_n([0,1])>0$ for each $\omega\not
\in\Omega_{\epsilon}$ and $n\geq 
n_{\epsilon}$.
Thus without loss of generality we assume
that $\mu_n([0,1])>0$ almost surely.
Then $Q_n:=\mu_n/\mu_n([0,1])$, 
$n\in\NN_{+}$, are random probability 
measures on $([0,1],\cB)$.
For each $n\in\NN_{+}$ let
$$
\xi_n:=K\Big (\sum_{k=0}^{\infty}|\psi_k|
\Big )^{1/2}\sqrt{\mu_n([0,1])}=
K\Big (\sum_{k=0}^{\infty}|\psi_k|\Big 
)^{1/2}\Big (\sum_{k=0}^{\infty}|\psi_k|\frac{1}{n}\sum_{i=1}^n\eta_{i-k}^2
\Big )^{1/2}.
$$
By (\ref{teo2:proof:eq11}) it then follows
that
$$
d_n(f,g)=p_n(f-g)\leq\xi_n\rho_{2,Q_n}
(f,g).
$$
By hypothesis (\ref{teo2:condition}) the set
$\cF$ is totally bounded with respect to
pseudometric $\rho_{2,Q_n}$.
Given $x>0$,
since each $\rho_{2,Q_n}$-ball of radius 
$x/\xi_n$ is contained in a $d_n$-ball 
of radius $x$,  we have 
$$
N(x,\cF,d_n)\leq N(x/\xi_n,\cF,
\rho_{2,Q_n}).
$$
Then by a change of variables it follows that for each $\delta > 0$,
$$
I(\delta):=\int^\delta_0\sqrt{\log 
N(x, \cF, d_n)}\, dx \leq \xi_n
\int^{\delta/\xi_n}_0\sqrt{\log 
N(x, \cF, \rho_{2,Q_n})}\, dx.
$$
For each $\delta >0$, let
$$
J(\delta):=\int^\delta_0\sup_{Q\in\cQ}\sqrt{\log N(x, \cF, \rho_{2,Q})}\, dx.
$$
Let $\e>0$ and let $\delta_n \downarrow 0$. For each $0 < m <M < \infty$ and 
$n \in \NN_{+}$, we have
\begin{equation}\label{teo2:proof:eq12}
\PP(I(\delta_n) > \e) \le 
\PP(MJ(\delta_n/m) > \e)+\PP(\xi_n>M)+ 
\PP(\xi_n <m).
\end{equation}
Taking $m>0$ small enough
the rightmost probability tends to zero 
with $n\to\infty$
since $\liminf_{n\to\infty}\xi_n \geq 
c\sigma_{\eta}>0$ almost surely.
For the next to rightmost probability we
have
$$
\sup_{n\ge 1}\PP(\xi_n>M) \le 
M^{-2}\sup_{n\ge 1} \EE\xi_n^2=
\frac{K^2\sigma_{\eta}^2}{M^2}\Big (
\sum_{k=0}^{\infty}|\psi_k|\Big )^2\to 0 
$$
as $M \to\infty$.
Since $J(\delta_n/m) \to 0$ as $n \to\infty$ by condition (\ref{teo2:condition}) the
first probability on the right side of
(\ref{teo2:proof:eq12}) is zero for
sufficiently large $n$.
It then follows that hypothesis (\ref{3VW}) holds.

Summing up, by Theorem \ref{VW}, 
$n^{-1/2}\nu_n$ is asymptotically 
equicontinuous with respect to $\rho_2$.
By Proposition \ref{AR3} the 
finite-dimensional distributions of
$n^{-1/2}\nu_n$ to finite-dimensional
distributions of $\sigma_{\eta}A_{\psi}\nu$.
Thus by Theorem 3.7.23 in \cite{GandN16}
the conclusion of Theorem \ref{teo2}
is proved.

\vskip10pt

 

\subsection{Proof of Theorem \ref{teo2a}}

The set $\cF_{q,M}$ satisfies the pointwise
countable approximation property as
it is proved in Example 3.7.13 in
\cite[p. 235]{GandN16}.
Clearly the pointwise countable
approximation property holds for a subset
$\cF$ of $\cF_{q,M}$.
Condition (\ref{teo2:condition}) holds
by Theorem 5 in \cite{NandR08}.
Therefore all the hypotheses of Theorem
\ref{teo2} hold true, and its conclusion
also holds true.
The proof of  Corollary \ref{teo2a} is
complete.

\section{Proofs of Theorems \ref{dualitymap} and \ref{teo1}}\label{Pvariation}

We begin with the proof of Theorem
\ref{dualitymap}.
Let $G:=T(L)$,
$(t_i)_{i=0}^m$ be a partition of 
$[a,b]$ and let $b=(b_1,\dots,b_m)\in\RR^m$.
Then $f_b:=\sum_{i=1}^mb_i
\id_{(t_{i-1},t_i]}\in\cW_q[a,b]$ and
$$
\Big |\sum_{i=1}^mb_i[G(t_i)-G(t_{i-1})]
\Big |=\Big |L\Big (\sum_{i=1}^mb_i
\id_{(t_{i-1},t_i]}\Big )\Big |
\leq \|L\|_{\cF_q}\|f_b\|_{[q]}.
$$
Let $\|b\|_q:=(\sum_{i=1}^m|b_i|^q)^{1/q}$.
Then $\|f_b\|_{\sup}=\max_i|b_i|\leq\|b\|_q$
and $\|f_b\|_{(q)}\leq 2\|b\|_q$ due to
Minkowski inequality.
Using extremal H\"older's equality 
we obtain the bound
$$
\Big (\sum_{i=1}^m|G(t_i)-G(t_{i-1})|^p
\Big )^{1/p}=\sup\Big\{\Big |\sum_{i=1}^m
b_i[G(t_i)-G(t_{i-1})]\Big |\colon\,
\|b\|_q\leq 1\Big \}\leq 3\|L\|_{\cF_q}.
$$
Since partition $(t_i)_{i=1}^m$ of $[a,b]$
is arbitrary, it follows that 
$\|G\|_{(q)}\leq 3\|L\|_{\cF_q}$.
Since $\|\id_{[a,(\cdot)]}\|_{[q]}$ is equal to $2$,
we have the bound $\|G\|_{\sup}\leq
2\|L\|_{\cF_q}$ and so (\ref{1dualitymap})
holds.
The proof of Theorem \ref{dualitymap} is
complete.

To prove Theorem \ref{teo1},
for $p\in (2,\infty)$ given as the 
hypothesis, let $q:=(p-1)/p$.
Then $p^{-1}+q^{-1}=1$ and 
$1<q<2$.
By Corollary \ref{teo2a}, the isonormal
Gaussian process $\nu$ restricted to
$\cF_q=\{f\in\cW_q[0,1]\colon\,\|f\|_{[q]}
\leq 1\}$ takes values in a separable
subset of $\ell^{\infty}(\cF_q)$, it is
measurable for Borel sets on its range and
\begin{equation}\label{2teo2a}
(\sigma_{\eta}|A_{\psi}|\sqrt{n})^{-1}
\nu_n\wsc \nu
 \quad\mbox{in $\ell^\infty(\cF_q)$.}
\end{equation}
By the Skorokhod-Dudley-Wichura 
representation theorem (Theorem 3.5.1 in
\cite{Dudley5}), there exist a probability
space $(S,\cS,Q)$ and perfect measurable
functions $g_n\colon\,S\to\Omega$ such that
$Q{\circ}g_n^{-1}=\PP$ on $\cA$ for each
$n\in\NN$ and
\begin{equation}\label{1teo2a}
\lim_{n\to\infty}\|(\sigma_{\eta}|A_{\psi}|
\sqrt{n})^{-1}\nu_n{\circ}g_n-
\nu{\circ}g_0\|_{\cF_q}^{\ast}=0\quad
\mbox{almost surely.}
\end{equation}
Here as for any real-valued function $\phi$ on a probability space,
$\phi^{\ast}$ is its measurable cover which always exists (e.g.,
Theorem 3.2.1 in \cite{Dudley5}).
For each $n\in\NN_{+}$ and  $s\in S$ let
$$
\mu_n(f,s):=\frac{\nu_n(f,g_n(s))}{\sigma_{\eta}|A_{\psi}|\sqrt{n}}, \,\,
f\in\cF_q,\quad\mbox{and}\quad
W_n(t,s):=\mu_n(\id_{[0,t]},s),\,\,
t\in [0,1].
$$
Also, for each $n\in\NN_{+}$, $s\in S$
and $f\in\cF_q$,
$$
|\mu_n(f,s)|\leq\frac{\sum_{i=1}^n|X_i(g_n
(s))|}{\sigma_{\eta}|A_{\psi}|\sqrt{n}}
\|f\|_{\sup}
$$
Hence for each $n\in\NN_{+}$ and $s\in S$,
$\mu_n(\cdot,s)$ is a linear bounded 
functional on $\cW_q[0,1]$.
Given $n,m\in\NN_{+}$ let $L:=\mu_n-\mu_m$.
Then $T(L)(t)=L(\id_{[0,t]})=W_n(t)-W_m(t)$ 
for each $t\in [0,1]$ and
$$
\|W_n-W_m\|_{[p]}\leq 5
\|\mu_n-\mu_m\|_{\cF_q}
$$ 
for each $s\in S$,
by Theorem \ref{dualitymap}.
For any functions $\phi,\xi\colon\,
S\to\RR$, we have $(\phi+\xi)^{\ast}\leq
\phi^{\ast}+\xi^{\ast}$ almost surely
(e.g. Lemma 3.2.2 in \cite{Dudley5}).
By (\ref{1teo2a}) it then follows that
$$
\lim_{m, n\to\infty}\|\mu_n-\mu_m
\|_{\cF_q}^{\ast}=0\quad
\mbox{almost surely.}
$$
Therefore, for each $s\in S$, 
$(W_n(\cdot,s))$ is a Cauchy 
sequence in the Banach space $\cW_p[0,1]$.

For each $s\in S$,  let $W(s):=\{W_t(s)
\colon\,t\in [0,1]\}\in\cW_p[0,1]$ be
a function such that 
$\|W_n(\cdot,s)-W(s)\|_{[p]}\to 0$
almost surely as $n\to\infty$.
For each $t\in [0,1]$, since 
$|W_t-W_n(t)|\leq \|W-W_n\|_{\sup}\to 0$
as $n\to\infty$, $W_t$ is measurable,
and so $W$ is a stochastic process.
For a Borel set $B\in\RR^k$ and $t_1,\dots,
t_k\in [0,1]$, we have
$$
Q\big (\big\{(W_n(t_1),\dots,W_n(t_k))\in B
\big\}\big )=\PP\big (\big\{(\sigma_{\eta}
|A_{\psi}|\sqrt{n})^{-1}(\nu_n(\id_{[0,t_1]}
),\dots,\nu_n(\id_{[0,t_k]})\in B\big\}
\big ).
$$
By (\ref{2teo2a}), the finite dimensional distributions (f.d.d.)
$(\sigma_{\eta}|A_{\psi}|\sqrt{n})^{-1}
(\nu_n(\id_{[0,t_1]}),\dots,
\nu_n(\id_{[0,t_k]})$ converge in distribution as
$n\to\infty$ to the f.d.d.
$(\nu(\id_{[0,t_1]},\dots,\nu (\id_{[0,t_k]}
))$ of the isonormal Gaussian process $\nu$
on $\cL^2([0,1])$.
Also, the f.d.d. $(W_n(t_1),\dots,  
W_n(t_k))$ converge in distribution as $n\to\infty$
to the f.d.d. $(W_{t_1},\dots,W_{t_k})$.
It then follows that $W$ is a Gaussian 
process with the covariance of a Wiener process. 
Since sample paths of $\nu$ are uniformly
continuous with respect to the pseudo-metric
$\rho_2$, $W$ has almost all sample paths
continuous, and so $W$ is a standard Wiener
process on $[0,1]$.

Let $\cC\cW_p^{\ast}[0,1]$ be the set of all
 $f\in \cW_p[0,1]$ such that 
$$
\lim_{\epsilon\downarrow 0}\sup\Big\{
\sum_{k=1}^m|f(t_k)-f(t_{k-1})|^p\colon\,0=t_0<t_1<\cdots<t_m=1, 
\,\,\max_k(t_k-t_{k-1})\leq\epsilon\Big\}=0.
$$
Then $\cC\cW_p^{\ast}[0,1]$ is separable
closed subspace of $\cW_p[0,1]$
(\cite{SVK84}).
Since for each $p'>2$, almos all sample
functions of a Wiener process are of
bounded $p'$-variation on $[0,1]$,
by Lemma 2.14 in \cite[Part II]{DN99},
it follows that almost all sample functions
of $W$ are in $\cC\cW_p^{\ast}[0,1]$.
Therefore $W_n$ converges in law to $W$ in
$\cW_p[0,1]$ by Corollary 3.3.5 in
\cite{Dudley5}).
The proof of Theorem \ref{teo1} is
complete.

\section{Applications}\label{Applications}

In this section we apply the preceding results 
to prove uniform asymptotic normality of least 
squares estimators in parametric regression 
models and to detect change points in trends 
of a short memory linear process.   
Throughout this section again $X_1,X_2,\dots$ is a
short memory linear process given by (\ref{lin-process})
with innovations $(\eta_j)$ and summable filter $(\psi_j)$ such that
 (\ref{filter:1}) holds.

\subsection{Simple regression model}

We start with a simple parametric regression
model $Y_j=\beta Z_{nj}+X_j$, $j=1, \dots, n$, 
where $\beta\in \eR$ is an unknown parameter 
and $Z_{nj}$ are explanatory variables for the 
process $(Y_j)$. 
We assume that $Z_{nj}=f(j/n)$ for
some function $f$ on $[0, 1]$. 
Then the least square estimator of $\beta$ is 
$$
\wh{\beta}_n=\wh{\beta}_n(f):=\Big(\sum_{j=1}^n 
f^2\Big (\frac{j}{n}\Big)\Big)^{-1}
\sum_{j=1}^n Y_j f\Big(\frac{j}{n}\Big).
$$
As a choice of the function $f$ in the 
representation of $Z_{nj}$ is not unique, 
finding an admissible class of functions  
becomes  an important task.   
In response to this question
we present what follows from our main result.

Note that equality
\begin{equation}\label{1regression1}
\sum_{j=1}^n f^2\Big (\frac{j}{n}\Big )
[\wh{\beta}_n(f)-\beta]=\sum_{j=1}^n X_jf
\Big (\frac{j}{n}\Big ).
\end{equation}
holds for each real valued function $f$ on
$[0,1]$ and each $n\in\NN_{+}$.
By Theorem \ref{CM1}, it then follows that
$$
W_n(f):=\frac{1}{\sqrt{n}}\sum_{j=1}^n f^2
\Big (\frac{j}{n}\Big )[\wh{\beta}_n(f)-\beta]
\wc\sigma_{\eta}A_{\psi}\nu (f),\quad
\mbox{as $n\to\infty$,}
$$
for each $f\in\cW_q[0,1]$ with $q\in [1,2)$, 
where $\nu$ is the
isonormal Gaussian process on $\cL_2[0,1]$.

As a straightforward consequence of Corollary
\ref{teo2a} and equality (\ref{1regression1}) 
we obtain a weighted asymptotic normality 
of the estimator $\wh{\beta}_n(f)$ uniformly
over the set of functions $\cF_q=\{f\in
\cW_q[0,1]\colon\,\|f\|_{[q]}\leq 1\}$,
$1\leq q< 2$.

\begin{corollary}\label{regression1} 
Let $1\le q<2$. There exists a version of the 
isonormal Gaussian process $\nu$
restricted to $\cF_q$ with values in a
separable subset of $\ell^{\infty}(\cF_q)$,
it is measurable for the Borel sets on its
range and
$$
W_n\wcs \sigma_{\eta}A_{\psi}\nu\quad\mbox{in 
$\ell^{\infty}(\cF_q)$ as $n\to\infty$}.
$$
\end{corollary}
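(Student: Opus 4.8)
The plan is to observe that the rescaled estimator process $W_n$ is, path by path, exactly the rescaled $\cF_q$-weighted sum $n^{-1/2}\nu_n$, so that the statement is an immediate instance of Corollary~\ref{teo2a} applied with $\cF=\cF_q$.

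First I would combine the identity (\ref{1regression1}) with the definition (\ref{nun}) of $\nu_n$: for every real-valued function $f$ on $[0,1]$ and every $n\in\NN_{+}$,
$$
\sum_{j=1}^n f^2\Big(\frac{j}{n}\Big)\big[\wh{\beta}_n(f)-\beta\big]=\sum_{j=1}^n X_jf\Big(\frac{j}{n}\Big)=\nu_n(f).
$$
Dividing by $\sqrt n$ gives $W_n(f)=n^{-1/2}\nu_n(f)$ for all $f\in\cF_q$ and all $n\in\NN_{+}$; that is, $W_n=n^{-1/2}\nu_n$ as random elements of $\ell^\infty(\cF_q)$.

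Next I would check that $\cF_q$ meets the hypotheses of Corollary~\ref{teo2a}. Since $\cF_q=\{f\in\cW_q[0,1]\colon\,\|f\|_{[q]}\le 1\}$ with $1\le q<2$, every $f\in\cF_q$ satisfies $v_q(f)=\|f\|_{(q)}^q\le 1$ and $\|f\|_{\sup}\le\|f\|_{[q]}\le 1$, so $\cF_q$ is a bounded set of functions on $[0,1]$ with bounded $q$-variation. Corollary~\ref{teo2a} therefore provides a version of the isonormal Gaussian process $\nu$ restricted to $\cF_q$ with values in a separable subset of $\ell^\infty(\cF_q)$, measurable for the Borel sets on its range, for which $n^{-1/2}\nu_n\wcs\sigma_{\eta}A_{\psi}\nu$ in $\ell^\infty(\cF_q)$. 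Together with the identification $W_n=n^{-1/2}\nu_n$ this yields $W_n\wcs\sigma_{\eta}A_{\psi}\nu$, which is the assertion.

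Because each step is a substitution or a direct appeal to Corollary~\ref{teo2a}, I do not expect any substantive obstacle: the entire content of the corollary is already packaged in Corollary~\ref{teo2a}, and the only item needing a word of explanation is the normal equation (\ref{1regression1}) identifying $W_n$ with $n^{-1/2}\nu_n$.
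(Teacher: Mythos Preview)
Your proposal is correct and matches the paper's own reasoning: the text immediately preceding the corollary states that it is ``a straightforward consequence of Corollary~\ref{teo2a} and equality~(\ref{1regression1}),'' which is precisely the identification $W_n=n^{-1/2}\nu_n$ followed by an appeal to Theorem~\ref{teo2a} with $\cF=\cF_q$ that you spell out.
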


Next we establish the (unweighted) 
asymptotic normality of $\wh{\beta}_n(f)$
uniformly over a  subset of $\cF_q$.
Since each regulated function is a
Riemann function, we have
\begin{equation}\label{1teo3}
I_n(f^2):=\frac{1}{n}\sum_{j=1}^nf^2
\Big (\frac{j}{n}\Big )\to\int_0^1f^2(x)\,dx
=:I(f^2),\quad\mbox{as $n\to\infty$},
\end{equation}
for any regulated function $f$.
Each function having bounded $p$-variation is
regulated (see e.g. \cite[p. 213]{DN99}).
Therefore by (\ref{1regression1}),
Theorem \ref{CM1} and Slutsky's lemma, 
if $f\in\cW_q[0,1]$ for some $1\leq q<2$ and 
$I(f^2)\not =0$, then
$$
n^{1/2}(\wh{\beta}_n(f)-\beta)\wc  
\sigma_{\eta}A_{\psi}N(0,v^2),
\quad\mbox{as $n\to\infty$},
$$
where $N(0,v^2)$ is Gaussian random variable
with mean zero and variance 
$v^2=(I(f^2))^{-1}$.
For each $\delta >0$ and $q\in [1,2)$, let
\begin{equation}\label{5teo3}
\cF_{q,\delta}:=\{f\in\cW_q[0,1]: 
\|f\|_{[q]}\leq 1,\,\,\,I(f^2)>\delta\}.
\end{equation}

\begin{theorem}\label{teo3} 
Let $q\in [1,2)$ and $\delta \in (0,1)$. 
There exists a version of a
Gaussian process $\nu_1$ indexed by
$\cF_{q,\delta}$ with values in a
separable subset of $\ell^{\infty}
(\cF_{q,\delta})$,
it is measurable for the Borel sets on its
range and
$$
\sqrt{n}(\wh{\beta}_n-\beta)
\wcs \sigma_{\eta}A_{\psi}\nu_1\quad\mbox{in 
$\ell^{\infty}(\cF_{q,\delta})$ as 
$n\to\infty$}.
$$
\end{theorem}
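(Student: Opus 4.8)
The plan is to reduce Theorem~\ref{teo3} to Corollary~\ref{teo2a} by combining the exact identity (\ref{1regression1}) with a uniform version of the Riemann-sum convergence (\ref{1teo3}) and a Slutsky-type argument in $\ell^{\infty}(\cF_{q,\delta})$. Writing $I_n(f^2)=n^{-1}\sum_{j=1}^nf^2(j/n)$ and $I(f^2)=\int_0^1f^2\,d\lambda$, the identity (\ref{1regression1}) together with (\ref{nun}) gives, whenever $I_n(f^2)\neq0$,
$$
\sqrt{n}\,[\wh{\beta}_n(f)-\beta]=\frac{1}{I_n(f^2)}\cdot\frac{\nu_n(f)}{\sqrt{n}} .
$$
The natural candidate for the limit process is $\nu_1=\{\nu_1(f):=\nu(f)/I(f^2)\colon\,f\in\cF_{q,\delta}\}$, where $\nu$ is the isonormal Gaussian process; since $(\nu_1(f_1),\dots,\nu_1(f_d))$ is a fixed diagonal linear transform of the Gaussian vector $(\nu(f_1),\dots,\nu(f_d))$, the process $\nu_1$ is Gaussian, with $\EE[\nu_1(f)\nu_1(g)]=\ip{f,g}/(I(f^2)I(g^2))$.

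The first step I would carry out is the uniform estimate $\sup_{f\in\cF_{q,\delta}}|I_n(f^2)-I(f^2)|\to0$, which is exactly what makes the reduction work uniformly over the (infinite, non-separable) index set. For $f\in\cF_{q,\delta}$ one has $\|f\|_{\sup}\le1$ and $v_q(f)\le1$, so $f^2$ has bounded $q$-variation with $v_q(f^2)\le(2\|f\|_{\sup})^qv_q(f)\le2^q$; bounding the Riemann-sum error on $[(j-1)/n,j/n]$ by $n^{-1}$ times the oscillation of $f^2$ there, using $\sum_{j}\mathrm{osc}_j(f^2)^q\le v_q(f^2)$, and applying H\"older's inequality gives $|I_n(f^2)-I(f^2)|\le n^{-1}\sum_j\mathrm{osc}_j(f^2)\le 2\,n^{-1/q}$, uniformly in $f$. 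Along the way I would record that $f\mapsto I(f^2)$ is $\rho_2$-Lipschitz on $\cF_{q,\delta}$ (since $|I(f^2)-I(g^2)|=|\int(f-g)(f+g)\,d\lambda|\le2\rho_2(f,g)$) and is bounded below by $\delta$ there, so $f\mapsto1/I(f^2)$ is bounded (by $1/\delta$) and $\rho_2$-Lipschitz. Hence the multiplication map $\Phi\colon\,\ell^{\infty}(\cF_{q,\delta})\to\ell^{\infty}(\cF_{q,\delta})$ given by $\Phi(\mu)(f):=\mu(f)/I(f^2)$ is a bounded linear operator, therefore continuous, and it carries separable subsets (in particular $UC(\cF_{q,\delta},\rho_2)$) into separable subsets.

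Then I would assemble everything. Since $\cF_{q,\delta}\subset\cF_q$, composing the conclusion $n^{-1/2}\nu_n\wcs\sigma_{\eta}A_{\psi}\nu$ of Corollary~\ref{teo2a} with the continuous restriction map $\ell^{\infty}(\cF_q)\to\ell^{\infty}(\cF_{q,\delta})$ and using the continuous mapping theorem (Theorem~1.3.6 in \cite{VandW96}) gives $n^{-1/2}\nu_n\wcs\sigma_{\eta}A_{\psi}\nu$ in $\ell^{\infty}(\cF_{q,\delta})$ ($\cF_{q,\delta}$ being $\rho_2$-totally bounded as a subset of the totally bounded $\cF_q$). Applying the continuous mapping theorem once more with $\Phi$ yields
$$
\Phi\big(n^{-1/2}\nu_n\big)\wcs\sigma_{\eta}A_{\psi}\,\Phi(\nu)=\sigma_{\eta}A_{\psi}\,\nu_1\qquad\text{in }\ell^{\infty}(\cF_{q,\delta}),
$$
and $\nu_1=\Phi(\nu)$ automatically takes values in a separable subset of $\ell^{\infty}(\cF_{q,\delta})$ and is Borel measurable on its range. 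It remains to show that the remainder $R_n:=\sqrt{n}\,[\wh{\beta}_n(\cdot)-\beta]-\Phi(n^{-1/2}\nu_n)$ is asymptotically negligible. For $n$ large the uniform estimate forces $I_n(f^2)>\delta/2$ for all $f\in\cF_{q,\delta}$, so $\wh{\beta}_n(f)$ is then defined there and
$$
\|R_n\|_{\cF_{q,\delta}}\le\frac{2}{\delta^2}\Big(\sup_{f\in\cF_{q,\delta}}\big|I_n(f^2)-I(f^2)\big|\Big)\,\big\|n^{-1/2}\nu_n\big\|_{\cF_{q,\delta}} ,
$$
where the deterministic first factor tends to $0$ and $\|n^{-1/2}\nu_n\|_{\cF_{q,\delta}}$ is bounded in outer probability (apply the continuous mapping theorem to the norm functional and the weak convergence just obtained); hence $\|R_n\|_{\cF_{q,\delta}}\to0$ in outer probability. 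An asymptotic-equivalence (Slutsky) lemma for convergence in outer distribution (see e.g.\ \cite{VandW96}) then gives $\sqrt{n}\,[\wh{\beta}_n-\beta]\wcs\sigma_{\eta}A_{\psi}\nu_1$ in $\ell^{\infty}(\cF_{q,\delta})$, which is the claim.

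I expect the main obstacle to be the uniform Riemann-sum estimate: because $\cF_{q,\delta}$ is non-separable in $\|\cdot\|_{[q]}$, a pointwise version of (\ref{1teo3}) does not suffice, and one must extract a rate that is uniform over the class, which is precisely what the uniform bound $v_q(f^2)\le2^q$ combined with the oscillation/H\"older argument provides. The continuous-mapping steps, the outer-probability bookkeeping for $R_n$, and the Slutsky lemma are then routine.
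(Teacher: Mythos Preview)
Your proposal is correct and follows essentially the same route as the paper: define $\nu_1(f)=\nu(f)/I(f^2)$, split $\sqrt{n}(\wh{\beta}_n(f)-\beta)=\big(I_n(f^2)^{-1}-I(f^2)^{-1}\big)n^{-1/2}\nu_n(f)+n^{-1/2}\nu_n(f)/I(f^2)$, handle the main term by the continuous mapping theorem applied to the bounded linear multiplication operator, and kill the remainder via a uniform bound on $|I_n(f^2)-I(f^2)|$ combined with asymptotic tightness of $n^{-1/2}\nu_n$. The only cosmetic difference is that the paper obtains the uniform Riemann-sum rate $O(n^{-1/2})$ through the $2$-variation of $f$ and Cauchy--Schwarz, whereas you obtain $O(n^{-1/q})$ through the $q$-variation of $f^2$ and H\"older; both estimates are valid and serve the same purpose.
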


\begin{proof}
Let $\nu$ be the isonormal Gaussian process
from Corollary \ref{teo2a}.
For each $f\in\cF_{q,\delta}$ let $\nu_1(f)
:=\nu(f)/I(f^2)$.
Let $z_1,\dots,z_k\in\RR$ and let $f_1,\dots,
f_k\in\cF_{q,\delta}$.
Since $\nu$ is linear, we have
$$
\sum_{i=1}^k\sum_{j=1}^kz_iz_jE[\nu_1(f_i)
\nu_1(f_j)]=E\nu^2\Big (\sum_{i=1}^kz_i
\frac{f_i}{I(f_i^2)}\Big )=\int_0^1
\Big (\sum_{i=1}^kz_i\frac{f_i}{I(f_i^2)}
\Big )^2d\lambda\geq 0.
$$
Therefore $\{\nu_1(f)\colon\,f\in\cF_{q,\delta}
\}$ is a Gaussian process with mean zero and
variance $E\nu_1^2(f)=(I(f^2))^{-1}$.
Since $\nu$ is linear we can consider $\nu_1$
to be $\nu$ restricted to the set $\{f/I(f^2)
\colon\, f\in\cF_{q,\delta}\}$, which is a
subset of $\{f\in\cW_q[0,1]\colon\, 
\|f\|_{[q]}\leq \delta^{-1}\}$.
Therefore the Gaussian process $\nu_1$ on
$\cF_{q,\delta}$ has values in a
separable subset of $\ell^{\infty}
(\cF_{q,\delta})$ and it is measurable for 
the Borel sets on its range.

Using notation (\ref{1AR3}), (\ref{nun}) and
equality (\ref{1regression1}), for each 
$f\in\cF_{q,\delta}$ and $n\in\NN_{+}$, 
we have
\begin{equation}\label{2teo3}
\sqrt{n}((\wh{\beta}_n(f)-\beta)=
\Big (\frac{1}{I_n(f^2)}-\frac{1}{I(f^2)}
\Big )\frac{\nu_n(f)}{\sqrt{n}}+
\frac{1}{\sqrt{n}}\nu_n\Big (\frac{f}{I(f^2)}
\Big ).
\end{equation}
We show that the second term on the right side
converges ir outer distribution to 
$\sigma_{\eta}A_{\psi}\nu_1$.
Let $C(f):=1/I(f^2)$ for each 
$f\in\cF_{q,\delta}$ and let $g\colon\,
\ell^{\infty}(\cF_{q,\delta})\to
\ell^{\infty}(\cF_{q,\delta})$ be a multiplication function with values $g(F):=CF$
for each $F\in\ell^{\infty}(\cF_{q,\delta})$.
One can check that $g$ is bounded and continuous.
Let $h\colon\,\ell^{\infty}(\cF_{q,\delta})
\to\RR$ be a bounded and continuous function.
Then the composition $h{\circ}g$ is also
bounded and continuous function from
$\ell^{\infty}(\cF_{q,\delta})$ to $\RR$.
Thus by (\ref{1teo2}) it follows that
$$
E^{\ast}h\big (n^{-1/2}C\nu_n\big )
=E^{\ast}h{\circ}g(n^{-1/2}\nu_n)\to
Eh{\circ}g(\nu)=Eh(\sigma_{\eta}A_{\psi}\nu_1)
$$
as $n\to\infty$.
Thus the second term on the right side of
(\ref{2teo3}) converges in outer distribution
to $\sigma_{\eta}A_{\psi}\nu_1$.
Finally, we will
show that the first term on the right side of
(\ref{2teo3}) converges to zero as $n\to\infty$
uniformly in $f$ and in outer probability 
(1.9.1 Definition in \cite{VandW96}).

Let $f\in\cF_{q,\delta}$ and $n\in\NN_{+}$.
Then
\begin{eqnarray*}
\lefteqn{I_n(f^2)-I(f^2)=\sum_{j=1}^n
\int_{(j-1)/n}^{j/n}\Big(f^2\Big(\frac{j}{n}
\Big)-f^2(x)\Big)\,dx}\\
&=&2\sum_{j=1}^n\int_{(j-1)/n}^{j/n}f(x)\Big(
f\Big(\frac{j}{n}\Big )-f(x)\Big)\,dx +
\sum_{j=1}^n\int_{(j-1)/n}^{j/n}\Big(f\Big(
\frac{j}{n}\Big )-f(x)\Big)^2\,dx.
\end{eqnarray*}
Since $v_2(f;[a,c])+v_2(f;[c,b])\leq 
v_2(f;[a,b])$ for any $0\leq a<c<b\leq 1$, it
follows that
$$
\sum_{j=1}^n\int_{(j-1)/n}^{j/n}\Big(f\Big(
\frac{j}{n}\Big )-f(x)\Big)^2\,dx\leq
\frac{v_2(f)}{n}.
$$
Then using H\"older's inequality, we obtain
the bound
$$
K_n(f):=
|I_n(f^2)-I(f^2)|\leq 2(I(f^2))^{1/2}
\Big (\frac{v_2(f)}{n}\Big )^{1/2}+
\frac{v_2(f)}{n}.
$$
For each $f\in\cF_{q,\delta}$,
$I(f^2)\leq\|f\|_{\sup}^2\leq 1$ and
$v_2(f)\leq \|f\|_{[q]}^2
\leq 1$ since $q<2$, and so 
$K_n(f)<3/\sqrt{n}$.
We observe that for each $f\in\cF_{q,\delta}$
and $n\in\NN_{+}$
$$
D_n(f):=\Big |\frac{1}{I_n(f^2)}-
\frac{1}{I(f^2)}\Big |\leq\frac{K_n(f)}{
I^2(f^2)(1-I^{-1}(f^2)K_n(f))}
$$
provided $I^{-1}(f^2)K_n(f)<1$.
Therefore for each $n>36/\delta^2$ we have
the bound
\begin{equation}\label{3teo3}
\|D_n\|_{\cF_{q,\delta}}
<\frac{6}{\sqrt{n}\delta^2}.
\end{equation}
To bound the first term on the right side of
(\ref{2teo3}) note that for each $\epsilon >0$
and for each $A\in\RR$
$$
\PP^{\ast}\Big (\Big\{\sup_{f\in\cF_{q,\delta}}
\Big |\frac{1}{I_n(f^2)}-\frac{1}{I(f^2)}
\Big |\frac{|\nu_n(f)|}{\sqrt{n}}>\epsilon
\Big )\Big )
\leq\PP^{\ast}(\{\|D_n\|_{\cF_{q,\delta}}
\|\nu_n\|_{\cF_{q,\delta}}>\epsilon\sqrt{n}\})
$$
\begin{equation}\label{4teo3}
\leq\PP(\{\|D_n\|_{\cF_{q,\delta}}
>\epsilon A\})+\PP^{\ast}(\{\|\nu_n\|_{\cF_{q,
\delta}}>A\sqrt{n}\}).
\end{equation}
By Corollary \ref{teo2a} the isonormal
Gaussian process $\nu$ restricted to 
$\cF_{q,\delta}$ has values in separable
subset of $\ell^{\infty}(\cF_{q,\delta})$
and is measurable for the Borel sets on its 
range.
Therefore its law $\PP{\circ}\nu^{-1}$ is
tight.
By lemma 1.3.8 in \cite{VandW96}, $n^{-1/2}
\nu_n$ is asymptotically tight.
It follows that
for each $\epsilon >0$ there is $A\in\RR$
such that
$$
\limsup_{n\to\infty}
\PP^{\ast}(\{\|\nu_n\|_{\cF_{q,
\delta}}>A\sqrt{n}\})<\epsilon
$$
By (\ref{3teo3}) the first probability on
the right side of (\ref{4teo3}) is zero for
all sufficiently large $n$.
The first term on the right side of
(\ref{2teo3}) converges to zero as $n\to\infty$
uniformly in $f$ and in outer probability.
Therefore the right side of (\ref{2teo3})
converges in outer distribution
to $\sigma_{\eta}A_{\psi}\nu_1$, and so does
the left side of (\ref{2teo3})
by lemma 1.10.2 in \cite{VandW96}.
The proof is complete. 
\end{proof}

For each real valued function $f$ on $[0,1]$
and for each $n\in\NN_{+}$ let
$$
Q_n(f):=\Big [\sum_{j=1}^n f^2\Big (\frac{j}{n}\Big )\Big ]^{1/2}[\wh{\beta}_n(f)-\beta].
$$
Recalling definition (\ref{5teo3}) of
the class of functions $\cF_{q,\delta}$ we
have the following result.

\begin{theorem}\label{teo4} 
Let $q\in [1,2)$ and $\delta \in 
(0,1)$. There exists a version of a
Gaussian process $\nu_2$ indexed by
$\cF_{q,\delta}$ with values in a
separable subset of $\ell^{\infty}
(\cF_{q,\delta})$,
it is measurable for the Borel sets on its
range and
$$
Q_n \wcs \sigma_{\eta}A_{\psi}\nu_2\quad
\mbox{in $\ell^{\infty}(\cF_{q,\delta})$ 
as $n\to\infty$}.
$$
\end{theorem}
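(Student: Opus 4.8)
The plan is to follow the proof of Theorem~\ref{teo3} almost verbatim, using the identity (\ref{1regression1}) once more. Writing $\sum_{j=1}^n f^2(j/n)=nI_n(f^2)$ in the notation of (\ref{1teo3}), equality (\ref{1regression1}) gives
$$
Q_n(f)=\frac{\nu_n(f)}{\sqrt{nI_n(f^2)}}=\frac{1}{\sqrt{I_n(f^2)}}\cdot\frac{\nu_n(f)}{\sqrt n},\qquad f\in\cF_{q,\delta},\ n\in\NN_{+}.
$$
Accordingly I would take the limit process to be $\nu_2(f):=\nu(f)/\sqrt{I(f^2)}$, $f\in\cF_{q,\delta}$, where $\nu$ is the isonormal Gaussian process from Corollary~\ref{teo2a}. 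Exactly as in the proof of Theorem~\ref{teo3}, since $\|f/\sqrt{I(f^2)}\|_{[q]}\le\|f\|_{[q]}\,\delta^{-1/2}\le\delta^{-1/2}$, the process $\nu_2$ is $\nu$ restricted to the bounded set $\{f/\sqrt{I(f^2)}\colon f\in\cF_{q,\delta}\}\subset\{f\in\cW_q[0,1]\colon\|f\|_{[q]}\le\delta^{-1/2}\}$, so it takes values in a separable subset of $\ell^{\infty}(\cF_{q,\delta})$ and is Borel measurable on its range, with mean zero and $E\nu_2^2(f)=1$.

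Then I would decompose, for each $n\in\NN_{+}$ and $f\in\cF_{q,\delta}$,
$$
Q_n(f)=\Big(\frac{1}{\sqrt{I_n(f^2)}}-\frac{1}{\sqrt{I(f^2)}}\Big)\frac{\nu_n(f)}{\sqrt n}+\frac{1}{\sqrt{I(f^2)}}\cdot\frac{\nu_n(f)}{\sqrt n},
$$
in analogy with (\ref{2teo3}). For the second term put $C(f):=1/\sqrt{I(f^2)}$; since $I(f^2)>\delta$ on $\cF_{q,\delta}$, multiplication by $C$ is a continuous self-map of $\ell^{\infty}(\cF_{q,\delta})$ (bounded as an operator by $\delta^{-1/2}$), so for any bounded continuous $h$ the composition $h\circ(C\,\cdot)$ is again bounded and continuous, and (\ref{1teo2}) (valid on $\cF_{q,\delta}$ by Corollary~\ref{teo2a}) yields $C\cdot n^{-1/2}\nu_n\wcs\sigma_{\eta}A_{\psi}C\nu=\sigma_{\eta}A_{\psi}\nu_2$ in $\ell^{\infty}(\cF_{q,\delta})$ — the same multiplication argument used for the second term of (\ref{2teo3}).

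For the first term I would show, as for the first term of (\ref{2teo3}), that it tends to $0$ uniformly in $f$ and in outer probability. From the elementary identity $|a^{-1/2}-b^{-1/2}|=|a-b|/\big(\sqrt{ab}(\sqrt a+\sqrt b)\big)$, together with the uniform bound $K_n(f)=|I_n(f^2)-I(f^2)|<3/\sqrt n$ established in the proof of Theorem~\ref{teo3} (which rests on $v_2(f)\le\|f\|_{[q]}^2\le1$ for $q<2$), and with $I(f^2)>\delta$ and $I_n(f^2)>\delta/2$ for $n>36/\delta^2$, one obtains $\|1/\sqrt{I_n(\cdot^2)}-1/\sqrt{I(\cdot^2)}\|_{\cF_{q,\delta}}\le c_\delta/\sqrt n$ for all large $n$, with $c_\delta$ depending only on $\delta$. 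Since $n^{-1/2}\nu_n$ is asymptotically tight on $\cF_{q,\delta}$ (Corollary~\ref{teo2a} and Lemma~1.3.8 in \cite{VandW96}), $\|n^{-1/2}\nu_n\|_{\cF_{q,\delta}}$ is bounded in outer probability, so the product goes to $0$ in outer probability. Combining the two terms by the Slutsky-type lemma for outer distribution (Lemma~1.10.2 in \cite{VandW96}) gives $Q_n\wcs\sigma_{\eta}A_{\psi}\nu_2$, as asserted. I do not expect a genuine obstacle: everything of substance is already contained in Corollary~\ref{teo2a} and in the proof of Theorem~\ref{teo3}; the only point requiring care is the re-use of the uniform estimate $K_n(f)<3/\sqrt n$ and the verification that the multiplication map is bounded and continuous.
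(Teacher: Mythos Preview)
Your proposal is correct and follows essentially the same route as the paper: the same decomposition of $Q_n(f)$, the same limiting process $\nu_2(f)=\nu(f)/\sqrt{I(f^2)}$, the same multiplication argument for the second term, and the same appeal to the uniform bound $K_n(f)<3/\sqrt n$ and asymptotic tightness for the first term. The only cosmetic difference is that the paper bounds the square-root difference via $|\sqrt{u}-\sqrt{v}|\le\sqrt{|u-v|}$ together with (\ref{3teo3}), whereas you use the identity $|a^{-1/2}-b^{-1/2}|=|a-b|/(\sqrt{ab}(\sqrt a+\sqrt b))$; either suffices.
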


\begin{proof}
The proof is similar to the one of Theorem
\ref{teo3}. 
Indeed, using equality (\ref{1regression1}),
notation (\ref{1teo3}) and (\ref{nun})
we have representation
\begin{eqnarray*}
\lefteqn{Q_n(f)=\frac{1}{(I_n(f^2))^{1/2}}
\frac{\nu_n(f)}{\sqrt{n}}}\\
&=&\left [\frac{1}{(I_n(f^2))^{1/2}}
-\frac{1}{(I(f^2))^{1/2}}\right ]
\frac {\nu_n(f)}{\sqrt{n}}+
\frac{1}{\sqrt{n}}\nu_n\Big (\frac{f}{(I(f^2
)^{1/2}}\Big )
\end{eqnarray*}
for each $f\in\cF_{q,\delta}$ and each
$n\in\NN_{+}$.
Note that
$
|\sqrt{u}-\sqrt{v}|\leq\sqrt{|u-v|}
$
for any $u\geq 0$ and $v\geq 0$.
Therefore the above term in the square brackets
approaches zero as $n\to\infty$ uniformly for 
$f\in\cF_{q,\delta}$ due to the bound
(\ref{3teo3}) given in the proof of Theorem
\ref{teo3}.
The conclusion of Theorem \ref{teo4} then 
follows as in the preceding proof.
\end{proof}

\subsection{Multiple change point model}

Consider a time-series model 
$$
Y_{nj}=\mu_{nj}+X_j,\ \  j=1, \dots, n.
$$
which is subject to unknown multiple 
change points
$(\tau_1^*,\dots,\tau_{d^*-1}^*)$.
We wish to test the null hypothesis
$$
H_0:\ \  \mu_{n1}=\cdots=\mu_{nn}=0
$$
against the multiple change alternative model
\begin{equation}\label{HA}
H_A: \ \ \mu_{nj}= \sum_{k=1}^{d^*}\beta_k\bm{1}_{I^*_k}(j/n), \ \ j=1, \dots, n,
\end{equation}
with  $d^*\in\NN_{+}$, $\beta_1,\dots,\beta_{d^*}$, 
and $I^*_k=(\tau^*_{k-1}, \tau^*_k]$, $0=\tau^*_0<\tau^*_1<\cdots<\tau^*_d=1$ being unknown parameters.

There is an enormous amount of literature where
change detection problems have been studied. 
The books by Basseville and 
Nikiforov \cite{BN},   Cs\"orgo  and  Horv\'ath  
\cite{CH},  Brodsky and Darkhovskay \cite{BD},  
Chen, Gupta \cite{ChG} introduce basics on 
various methods. 
We suggest a testing procedure based on uniform 
asymptotic normality of partial sum processes
obtained in the present paper. 

For each $d\in\NN_{+}$, let $\cT_d$ be a set
of all partitions $(\tau_k)$ of the interval 
$[0, 1]$ such that
$0=\tau_0<\tau_1<\cdots<\tau_d=1$. 
For a partition $\tau=(\tau_k)\in \cT_d$
one fits the regression model 
\begin{equation}\label{model2a}
Y_{nj}=\sum_{k=1}^{d}\beta_k\bm{1}_{I_k}(j/n)+X_j,
\ \ j=1, \dots, n,
\end{equation}
where $I_k=(\tau_{k-1}, \tau_k]$.
The parameter
$\bm{\beta}=[\beta_1, \dots, \beta_{d}]'$ 
is obtained by the least square estimator
$$
\wh{\bm{\beta}}=\wh{\bm{\beta}}(\bm{\tau})=\bm{Q}_n^{-1}\Big[\sum_{j=1}^nY_{nj}\bm{1}_{I_1}(j/n), \dots, \sum_{j=1}^nY_{nj}\bm{1}_{I_d}(j/n)\Big]',
$$
where
$$
\bm{Q}_n=\diag\{[\tau_kn]-[\tau_{k-1}n], k=1, \dots, d\}. 
$$
For $p\geq 1$ let $\|x\|_p$ be the 
$\ell_p$-norm of a vector $x\in\eR^d$. 
Then
$$
\|\bm{Q}_n\wh{\bm{\beta}}\|_p
=\Big(\sum_{k=1}^d\Big|\sum_{j=1}^nY_{nj}\bm{1}_{I_k}(j/n)\Big|^p\Big)^{1/p}.
$$
Let
$$
T_n=T_n(Y_{n1}, \dots, Y_{nn}):=
\sup_{d\in\NN_{+}}\sup_{(\tau_k)\in \cT_d}
\|\bm{Q}_n\wh{\bm{\beta}}\|_p.
$$
Under the null hypothesis $H_0$,
by (\ref{teo1b}), the statistic 
$T_n=\|S_n\|_{(p)},$
where $S_n$ is the
partial sum process of a linear process 
$(X_k)$.
The following fact is based on Corollary
\ref{teo1a}.

\begin{theorem} Let $p>2$. Under the null hypothesis $H_0$ it holds
$$
n^{-1/2}A_{\psi}^{-1}\sigma_{\eta}^{-1}T_n
\wc ||W||_{(p)},\quad\mbox{as $n\to\infty$}.
$$
\end{theorem}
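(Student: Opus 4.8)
The plan is to derive the statement directly from Corollary~\ref{teo1a} by composing with the continuous root map $t\mapsto t^{1/p}$. First I would make the identification that is already recorded in the text: under $H_0$ one has $Y_{nj}=X_j$ for all $j$, so $\bm{Q}_n\wh{\bm\beta}(\bm\tau)=\big[\sum_{i=k_0+1}^{k_1}X_i,\dots,\sum_{i=k_{d-1}+1}^{k_d}X_i\big]'$ with $k_j=\lfloor\tau_j n\rfloor$, whence $\|\bm{Q}_n\wh{\bm\beta}(\bm\tau)\|_p^p=\sum_{j=1}^d|\sum_{i=k_{j-1}+1}^{k_j}X_i|^p$. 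Taking the supremum over $d\in\NN_{+}$ and over partitions $(\tau_k)\in\cT_d$ and comparing with (\ref{teo1b}) gives $T_n^p=v_p(S_n)$, i.e. $T_n=\|S_n\|_{(p)}$, where $S_n$ is the $n$th partial sum process of the short memory linear process $(X_i)$. Consequently $n^{-1/2}T_n=\big(n^{-p/2}v_p(S_n)\big)^{1/p}$.

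Next I would invoke Corollary~\ref{teo1a}, which asserts
$$
n^{-p/2}v_p(S_n)\wc \sigma_\eta^p|A_\psi|^p\,v_p(W)\qquad\text{as }n\to\infty ,
$$
the right-hand constant being nonnegative in accordance with $v_p(S_n)\ge 0$ and with the invariance $v_p(cW)\stackrel{d}{=}|c|^p v_p(W)$. Since $p>2$, almost all sample paths of the Wiener process $W$ have finite $p$-variation (as already used in the proof of Theorem~\ref{teo1}), so $v_p(W)$ is an almost surely finite nonnegative random variable and $\|W\|_{(p)}=v_p(W)^{1/p}$ is a genuine real random variable.

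Finally, the map $\phi\colon[0,\infty)\to[0,\infty)$, $\phi(t)=t^{1/p}$, is continuous, so the continuous mapping theorem (e.g. Theorem~1.3.6 in \cite{VandW96}) applied to the displayed convergence yields
$$
n^{-1/2}T_n=\phi\big(n^{-p/2}v_p(S_n)\big)\wc\phi\big(\sigma_\eta^p|A_\psi|^p v_p(W)\big)=\sigma_\eta|A_\psi|\,\|W\|_{(p)} ,
$$
and dividing both sides by $\sigma_\eta|A_\psi|$ (finite and nonzero by (\ref{filter:1})) gives the assertion. There is no substantive obstacle here: the only points needing care are the elementary bookkeeping that shows $T_n=\|S_n\|_{(p)}$ under $H_0$, the observation that $t\mapsto t^{1/p}$ is continuous on the half-line so that the continuous mapping theorem applies, and the fact that $p>2$ guarantees $v_p(W)<\infty$ almost surely so that the limit is a well-defined real random variable.
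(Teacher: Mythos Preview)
Your proposal is correct and follows exactly the approach the paper indicates: the paper simply records that under $H_0$ one has $T_n=\|S_n\|_{(p)}$ and that the result is ``based on Corollary~\ref{teo1a}'', without spelling out the continuous-mapping step you have written. Your added details---the bookkeeping for $T_n^p=v_p(S_n)$, the use of $t\mapsto t^{1/p}$, and the remark that $|A_\psi|$ rather than $A_\psi$ is what appears after taking $p$-variation---are all sound and merely make explicit what the paper leaves implicit.
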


Under the alternative $H_A$ we have 
$n^{-1/2}T_n\pc \infty$ provided
$$
\sqrt{n}\Big(\sum_{k=1}^{d^*}(\tau_k^*-\tau^*_{k-1})^p|\beta_k|^p\Big)^{1/p}\to \infty\quad\mbox{as $n\to\infty$.}
$$
Calculations of $p$-variation of piecewise functions are available in $R$ environment package under the
name \texttt{pvar} developed by Butkus and Norvai\v sa \cite{BandN}.  
Simulation analysis of the statistic $T_n$ is out of the scope of the paper and will appear elsewhere.

\end{document}